\documentclass[a4paper, 12pt]{article}

\usepackage[sort&compress]{natbib}
\bibpunct{(}{)}{;}{a}{}{,} 

\usepackage{amsthm, amsmath, amssymb, mathrsfs, multirow, url, subfigure}
\usepackage{graphicx} 
\usepackage{ifthen} 
\usepackage{amsfonts}
\usepackage[usenames]{color}
\usepackage{fullpage}
\usepackage{tikz}

%\RequirePackage[OT1]{fontenc} 
%\RequirePackage{hypernat}
%\RequirePackage[colorlinks]{hyperref}
%\hypersetup{citecolor=blue, linkcolor=red, urlcolor=black}

%\numberwithin{equation}{section} 

\theoremstyle{plain} 
\newtheorem{thm}{Theorem}

\newtheorem{prop}{Proposition}

\theoremstyle{definition}
\newtheorem{defn}{Definition}
\newtheorem*{asmp}{Model Assumptions}

\theoremstyle{remark}

\newtheorem*{dpbayes}{Default-prior Bayes}
\newtheorem*{gfid}{Generalized fiducial}

\newcommand{\prob}{\mathsf{P}}

\newcommand{\unif}{{\sf Unif}}

\newcommand{\RR}{\mathbb{R}}

\newcommand{\XX}{\mathbb{X}}
\newcommand{\YY}{\mathbb{Y}}

\newcommand{\UU}{\mathbb{U}}

\newcommand{\TT}{\mathbb{T}}

\newcommand{\G}{\mathscr{G}}
\newcommand{\Gbar}{\overline{\mathscr{G}}}
\newcommand{\K}{\mathscr{K}}

\newcommand{\eps}{\varepsilon}

\newcommand{\prior}{\mathsf{Q}}

\newcommand{\cred}{\mathscr{C}}

\newcommand{\lPi}{\underline{\Pi}}
\newcommand{\uPi}{\overline{\Pi}}

\title{Fiducial inference viewed through a possibility-theoretic inferential model lens}
\author{Ryan Martin\footnote{Department of Statistics, North Carolina State University, {\tt rgmarti3@ncsu.edu}}}
\date{\today}

\begin{document}

\maketitle 

\begin{abstract}
Fisher's fiducial argument is widely viewed as a failed version of Neyman's theory of confidence limits.  But Fisher's goal---Bayesian-like probabilistic uncertainty quantification without priors---was more ambitious than Neyman's, and it's not out of reach.  I've recently shown that reliable, prior-free probabilistic uncertainty quantification must be grounded in the theory of imprecise probability, and I've put forward a possibility-theoretic solution that achieves it.  This has been met with resistance, however, in part due to statisticians' singular focus on confidence limits.  Indeed, if imprecision isn't needed to perform confidence-limit-related tasks, then what's the point?  In this paper, for a class of practically useful models, I explain specifically why the fiducial argument gives valid confidence limits, i.e., it's the ``best probabilistic approximation'' of the possibilistic solution I recently advanced.  This sheds new light on what the fiducial argument is doing and on what's lost in terms of reliability when imprecision is ignored and the fiducial argument is pushed for more than just confidence limits.  

\smallskip

\emph{Keywords and phrases:} Bayesian; confidence distribution; false confidence; generalized fiducial; group invariance; imprecise probability.
\end{abstract}

\section{Introduction}
\label{S:intro}

In the early 20th century, statistical inference and Bayesian inverse probability were synonymous.  Fisher, dissatisfied with the Bayesians' insistence on an {\em a priori} distribution and their default use of flat prior distributions, put forward a novel alternative---the {\em fiducial argument} \citep[e.g.,][]{fisher1930, fisher1933, fisher1935b, fisher1935a}.  At a high level, the fiducial argument takes the model-based probabilities for the observable data, depending on fixed parameters, and {\em flips} them into probability statements about the unknown parameters, depending on the observed data; see, e.g., \citet{zabell1992}, \citet{seidenfeld1992}, and \citet{dawid2020} for details.  Fisher's proposal was too good to be true in general, so, naturally, both supporters and skeptics carefully scrutinized Fisher's claims, some even attempted reformulating Fisher's proposal.  Most notable is Neyman's theory of {\em confidence limits}, which Neyman himself considered to be ``an extension of the previous results of Fisher'' \citep{neyman1941}.  Besides Neyman, the work of Dempster \citep[e.g.,][]{dempster1966, dempster1967, dempster1968a} aimed at fixing/extending the fiducial argument, and was a key development in imprecise probability theory.  More recently, Hannig and others have developed a theory of {\em generalized fiducial inference} in which the ``fiducial flip'' can be applied more systematically; see, e.g., \citet{hannig.review}, \citet{murph.etal.fiducial}, and Section~\ref{SS:fid}.

One thing that Fisher and Neyman would've agreed on is that the fiducial argument and the theory of confidence intervals are distinct.  Still, much of the debate surrounding the two theories focused on whether the fiducial argument produced confidence limits.  Fisher said yes, he was later proved wrong, hence ``Fisher's biggest blunder'' \citep{efron1998}.  It's surprising to me that Fisher didn't shift the conversation away from confidence limits.  Even if the fiducial-limits-are-confidence-limits claim was correct, that was a losing battle: a particular confidence limit construction, even an ingenious one, can't do better than the ``best'' confidence limits available in a given application.  

Lost in this singular confidence-limit focus is that Fisher's goal was more ambitious: it aimed to provide Bayesian-like probabilistic uncertainty quantification in the absence of prior information.  That goal remains the ``most important unresolved problem in statistical inference'' \citep{efron.cd.discuss}, but, with a few exceptions \citep[e.g.,][]{taraldsen.lindqvist.2013}, modern efforts in this direction remain focused solely on confidence-limit related questions.  This is still a losing battle, I think, so a different perspective is needed.  

Simply put, a full-blown probability distribution isn't needed to get confidence limits.  Instead, the construction of data-dependent probability distributions for inference is motivated by a desire to quantify uncertainty more broadly.  In that case, our evaluation of proposed solutions ought to be consistent with these broader objectives, so the metrics I've been advocating for take the (data-dependent) probability as the primitive, and require that the ``probabilities'' this distribution assigns to true/false hypotheses tend to not be small/large.  The rationale is that judgments will be made based on the magnitudes of these probabilities, so the above condition implies that these inferences would be reliable or {\em valid} in a certain sense; see Section~\ref{SS:im}.  It's only after realizing that precise probability distributions are incapable of achieving this kind of reliability that the need for imprecise-probabilistic or {\em possibilistic} considerations.  That's the motivation behind the possibility-theoretic {\em inferential model} (IM) framework that I've been advocating for recently \citep[e.g.,][]{imposs, imchar, martin.partial2}.

Despite the clear differences between fiducial and IMs, there are some superficial similarities.  In particular, it's common for the confidence limits derived from an IM to match those derived from a fiducial-like solution.  Some authors \citep[e.g.,][]{prsa.conf, cui.hannig.im} have recognized this connection and drawn the conclusion that the IM's imprecision is unnecessary, that it's somehow enough to work with fiducial/confidence distributions and, more generally, precise probability theory.  These arguments are squarely focused on confidence limits, so they overlook the fact that the ``confidence'' property satisfied by ``confidence distributions'' isn't preserved under the probability calculus.  For a theory of ``confidence distributions'' to do anything more than produce confidence limits, i.e., to not be a losing battle in the sense above, it {\em must} have its foundations in the theory of imprecise probability.  

I've already responded to some of these critiques \citep{imchar, prsa.response}, but I have some new insights to share that will help clear up this confusion about confidence limits versus (imprecise) probabilistic inference.  Specifically, my goal here is to give a complete characterization of the relationship between the fiducial solution---which agrees with the default-prior Bayes solution, among others, in the context I'm considering---and the IM solution.  That is, following some background in Section~\ref{S:background} and a description of the models under consideration (Section~\ref{SS:invariant}), I show in Section~\ref{SS:main} that the fiducial solution is the maximal, inner probabilistic approximation to the IM's possibility measure output.  This connection, along with the IM's validity property, explains why the fiducial argument returns genuine confidence limits---agree with the IM's limits---in this class of problems.  It also sheds light on why the IM's imprecision is needed, i.e., the maximal inner probabilistic approximation of the marginal IM need not be the corresponding marginal fiducial distribution.  It's for this and other similar reasons (Section~\ref{S:new}) that no fiducial-like argument grounded in precise probability can resolve Efron's ``most important unresolved problem.'' Finally, in Section~\ref{S:false.confidence}, I offer a partial answer to a key question: which hypotheses are not afflicted by {\em false confidence} \citep{balch.martin.ferson.2017, martin.nonadditive} relative to the fiducial distribution?

\section{Background}
\label{S:background}

\subsection{Problem setup}
\label{SS:setup}

Let $X \in \XX$ denote observable data---could be a scalar, a vector, a matrix, a collection scalars, vectors, or matrices, or something else entirely.  The standard textbook case of a sample from some population is covered by this general setup, as are many others.  Let $\prob_\theta$ denote a posited statistical model for the observable $X$, depending on a parameter $\theta \in \TT$.  Again, the parameter $\theta$ is very general, but it's typically a scalar or a vector.  Of course, the parameter is {\em unknown} and, as is customary, when I'm referring to the uncertain variable I'll write it as $\Theta$, saving the symbol $\theta$ for its particular values.  

The goal is to make inference about $\Theta$ based on {\em only} the observed value $x$ of $X$ and the structure included in the posited model.  By ``make inference'' I mean quantify uncertainty about $\Theta$, given $X=x$, via a (precise or imprecise) probability distribution supported on $\TT$.  Of course, there are many ways that this can be carried out, and I'll describe those most relevant to this paper below.  First I have to stress that there is {\em no prior knowledge about $\Theta$ assumed here}, i.e., the prior about $\Theta$ is vacuous.  While I don't believe a vacuous-prior-knowledge assumption is realistic in most applications \citep{martin.partial}, this is the typical starting point in the literature.  This means that neither ordinary Bayesian inference with a single prior distribution \citep{berger1985, bernardo.smith.book, ghosh-etal-book} nor generalized Bayesian  inference with a proper subset of all prior distributions \citep{walley1991, augustin.etal.bookchapter} are viable options to achieve the desired goal.  Other solutions, like the ones I describe briefly below, are needed.

\subsection{Fiducial-like constructions}
\label{SS:fid}

Fisher's original fiducial argument can be directly applied to only a relatively narrow range of problems.  It's possible to transform other problems into ones to which the fiducial argument applies, but this becomes more challenging when the parameter $\theta$ is multivariate.  There's a general---but still relatively narrow---class of problems for which a version of Fisher's fiducial argument applies; see Section~\ref{S:connection} for details.  

Despite Fisher's failure to formulate a fully general ``theory of fiducial inference,'' the idea is so appealing that many others have advanced their own versions.  Of course, some of these alternatives are more consistent with Fisher's vision than others.  I'll give details here about only two of these alternatives.
\begin{dpbayes}
The basic idea is to take the posterior distribution to have a density (with respect to Lebesgue measure on $\TT$) given by 
\[ q_x(\theta) \propto L_x(\theta) \, q(\theta), \quad \theta \in \TT, \]
where $L_x$ is the likelihood function based on data $X=x$, $q$ is some non-negative function, not necessarily a probability density, and the proportionality constant is determined by integrating the expression in the above display.  Of course, if that integral diverges, which is possible when $q$ isn't a density, then the posterior isn't well defined.  This idea goes back at least to Laplace, who suggested that, in the absence of genuine subjective prior input for $\Theta$, one can take a flat, uniform prior distribution on $\TT$.  The uniform prior's lack of reparametrization invariance was later resolved by \citet{jeffreys1946}. While Jeffreys's class of priors is widely used, it has two shortcomings: first, the Fisher information matrix it depends on doesn't always exist \citep[e.g.,][]{shemyakin2014, lin.martin.yang.hellinger} and, second, a ``good'' default prior for $\Theta$ can't support ``good'' posteriors for all features of $\Theta$ \citep[e.g.,][]{fraser2011, fraser.etal.2016}.  {\em Reference priors} \citep[e.g.,][]{bergerbernardosun2009, bernardo1979, berger2006} aim to overcome these issues.  Finally, since these priors depend on the posited statistical model, Bayesian inference based on them violates the likelihood principle \citep[e.g.,][]{basu1975, birnbaum1962, bergerwolpert1984}. 
\end{dpbayes}

\begin{gfid}
Following \citet{hannig.review} and the references therein, the starting point is the so-called data-generating equation
\begin{equation}
\label{eq:dge}
X = a(\Theta, Z), \quad Z \sim \prob,
\end{equation}
where $a$ is a known function and $\prob$ is a known probability distribution.  This is familiar in the context of data simulation.  In the inference setting, this expression effectively links the observable data $X$ to the uncertain $\Theta$ through a random variable with known distribution.  This is the basic starting point taken by many different frameworks, including those in \citet{fraser1968}, \citet{dawidstone1982}, and \citet{dempster2008}.  Hannig defines the {\em generalized fiducial distribution} for $\Theta$, given $X=x$, as the weak limit (as $\eps \to 0$) of the random variable that solves the constrained optimization problem
\[ \arg\min_{\theta \in \TT} d\bigl( X, a(\theta, Z) \bigr) \quad \text{subject to} \quad \min_{\theta \in \TT} d\bigl( X, a(\theta,Z) \bigr) \leq \eps, \]
where $d$ is an appropriate distance measure.  I'll focus here on the case where the above solution is unique, but that's not necessary for the theory.  When other suitable conditions are met, there's a simple formula for the generalized fiducial density, i.e., 
\begin{equation}
\label{eq:gf.density}
q_x(\theta) \propto L_x(\theta) \, J(x,\theta),
\end{equation}
where $J(x,\theta)$ is basically a Jacobian term resulting from the transformation of $(Z,\prob)$ to the solution $\Theta$ of the above optimization problem.  Despite the obvious similarities, there's a key difference between Bayes's rule and the right-hand side of \eqref{eq:gf.density}: $J$ is not a prior density---it might even depend on data---that's chosen by the user, it's completely determined by the posited model (and the distance $d$).  
\end{gfid}

The two very different approaches described above produce inferential output that looks very similar.  In fact, for the class of problems considered in Section~\ref{S:connection}, their respective solutions are the same.  More generally, both return probability distribution with density functions determined by the likelihood times a ``weight function,'' which might depend on data.  Given their similar forms, one might expect the two solutions to have similar properties.  As is common, let's consider large-sample case where $x^n=(x_1,\ldots,x_n)$ is an iid sample of size $n \to \infty$ from a common distribution depending on a parameter $\Theta \in \TT \subseteq \RR^D$.  When standard regularity conditions are satisfied, both the above solutions have a corresponding {\em Bernstein--von Mises theorem}, which goes as follows.  Let ${\sf G}_{x^n}$ denote the $D$-dimensional Gaussian distribution with mean equal to the maximum likelihood estimator and covariance matrix equal to $n^{-1}$ times the (observed) Fisher information matrix.  Then the theorem states that the total variation distance between $\prior_{X^n}$ and ${\sf G}_{X^n}$ is vanishing (in $\prob_\Theta$-probability) as $n \to \infty$; for a proof in the Bayesian case, see \citet[][Ch.~10.2]{vaart1998}, and details for the generalized fiducial distribution can be found in \citet{hannig.review} and the references therein.  

The importance of the total-variation distance is that it's a strong enough metric to ensure that confidence limits derived from the Bayes or fiducial distributions are, in fact, confidence limits, at least approximately as $n \to \infty$.  For the most part, the theory behind these solutions stops here---that's because the focus is squarely on confidence limits.  As I explained above, a focus on confidence limits is a losing battle in the sense that it'll fall short of resolving the ``most important unresolved problem.'' To get over this hump, a broader perspective on uncertainty quantification is needed.

\subsection{Inferential models}
\label{SS:im}

The {\em inferential model} (IM) formulation, first developed in \citet{imbasics, imbook}, has output that takes the mathematical form of a possibility distribution or, equivalently, a consonant belief function.  Their motivation for the introduction of imprecision was that, in order to be reliable when quantifying uncertainty more broadly than with confidence limits, one needs to be more conservative.  The {\em possibilistic} brand of imprecision, compared to other imprecise probability models (e.g., lower previsions), is ideally suited to achieve the error rate control properties that statisticians desire.  

More recently, \citet{martin.partial} developed a simpler and more flexible IM construction, one that makes direct use of the posited model's likelihood function rather than a data-generating equation like in \eqref{eq:dge}.  This new formulation allows for the incorporation of available partial prior information about $\Theta$, but here I'm assuming no prior information.  Let $L_x(\theta)$ denote the likelihood function, and define the {\em relative likelihood}
\begin{equation}
\label{eq:eta}
R(x,\theta) = \frac{L_x(\theta)}{\sup_{\vartheta \in \TT} L_x(\vartheta)}, \quad \theta \in \TT. 
\end{equation}
Next, define the {\em possibility contour} 
\begin{equation}
\label{eq:contour}
\pi_x(\theta) = \prob_\theta\{ R(X,\theta) \leq R(x,\theta)\}, \quad \theta \in \TT. 
\end{equation}
Note that $\pi_x$ has maximum value 1, which is attained at a maximum likelihood estimator $\hat\theta_x \in \arg\max_\theta L_x(\theta)$.  The contour function determines the IM's imprecise-probabilistic output via the formulae 
\begin{align}
\uPi_x(A) & = \sup_{\theta \in A} \pi_x(\theta) \label{eq:upper} \\
\lPi_x(A) & = 1 - \uPi_x(A^c), \quad A \subseteq \TT. \notag
\end{align}
Here $\lPi_x$ and $\uPi_x$ are conjugate lower and upper probabilities or, more specifically, $\lPi_x$ and $\uPi_x$ are necessity and possibility measures, respectively.  
%Alternatively, $(\lPi_x, \uPi_x)$ can be interpreted as a consonant belief and plausibility function pair.  
I'll call the IM's output {\em possibilistic} to emphasize the fact that $\uPi_x$ is a possibility measure.  Details concerning interpretation of the IM's output can be found in \citet{martin.partial, martin.partial2}; details on computation can be found in \citet{syring.martin.isipta21} and  \citet{hose.hanss.martin.belief2022}.  

The possibility contour expression in \eqref{eq:contour} is familiar---it's the p-value for a likelihood ratio test of the null hypothesis $H_0: \Theta=\theta$---but that's not the reason why the contour is defined in this way.  There's what I believe to be a principled construction that justifies this choice; see \citet[][Sec.~4]{martin.partial2} for details.  
%This is partly why I believe that Fisher's two seemingly separate theories of significance tests/p-values and fiducial inference are actually related---they're both linked to possibility theory.  

The IM framework originated from the desire to achieve the best of both worlds: Bayesian-like ``probabilistic'' uncertainty quantification and inference with frequentist-style calibration and error rate control guarantees.  Indeed, of primary importance is that the IM's output be {\em valid} in the sense that the upper probability $x \mapsto \uPi_x$ satisfy 
\begin{equation}
\label{eq:valid}
\sup_{\theta \in A} \prob_{\theta}\{ \uPi_X(A) \leq \alpha \} \leq \alpha, \quad \alpha \in [0,1], \quad A \subseteq \TT. 
\end{equation}
There's an equivalent condition in terms of the lower probability, but it's not needed here.  The intuition behind \eqref{eq:valid} is as follows: assigning a relatively small upper probability to a true assertion might lead to an erroneous inference, so \eqref{eq:valid} helps protect the decision-maker by ensuring that such events are relatively rare.  This is the IM's reliability guarantee.  It's easy to see that the IM with upper probability $\uPi_x$ in \eqref{eq:upper} satisfies \eqref{eq:valid}: it follows immediately from the inequality $\uPi_X(A) \geq \pi_X(\Theta)$, for all $A$ that contain $\Theta$, and the fact that $\pi_X(\Theta)$ is stochastically no smaller than $\unif(0,1)$.  It's important to note that \eqref{eq:valid} is different from the usual frequentist Type~I error control in testing.  Validity covers all $A$'s at once, hence ensuring reliability even when interest is in a (potentially non-linear) feature $\psi(\Theta)$ of $\Theta$; just take, e.g., $A = \{\theta: \psi(\theta) \leq 7\}$. 

Finally, adjustments can be made to the IM construction above that do not affect the validity and, in fact, can often help to improve its efficiency.  One obvious adjustment is that it suffices to work with a minimal sufficient statistic.  More than that, for cases where the minimal sufficient statistic has an ancillary component---like in the class of problems considered below---I showed \citep[][Sec.~6]{martin.partial2} that the IM construction should be carried out conditional on the observed value of the ancillary statistic.  Further details on this will be given in Section~\ref{SS:im.group}.

\section{A fiducial--IM connection}
\label{S:connection}

\subsection{Invariant statistical models}
\label{SS:invariant}

Let $\G$ denote a group of bijections $g: \XX \to \XX$ acting on $\XX$, with function composition $\circ$ as the binary operation.\footnote{If $\XX$ is a product space, then $\G$ can be extended, if necessary, by applying it coordinate-wise.}  As is customary in the literature, I'll write $gx$ for the image of $x \in \XX$ under transformation $g \in \G$; and if $g_1$ and $g_2$ are two group elements, then $g_1 \circ g_2$ denotes their composition.  Since $\G$ is a group, it's associative, i.e., $g_1 \circ (g_2 \circ g_3) = (g_1 \circ g_2) \circ g_3$ for all $g_1,g_2,g_3 \in \G$, it contains the identity transformation $e$, and for every $g \in \G$, there exists an inverse $g^{-1} \in \G$ such that $g \circ g^{-1} = g^{-1} \circ g = e$. Some examples include location shifts, rescaling, rotations, affine transformations, and permutations.  Note that the group $\G$ is typically associated with a finite-dimensional space, e.g., rotations in Euclidean space correspond to unit-determinant matrices.  

The group $\G$ connects to the statistical model as follows.  Suppose that, for each $g \in \G$ and each $\theta \in \TT$, there exists a corresponding $\bar g \theta \in \TT$ such that 
\begin{equation}
\label{eq:invariant}
\prob_\theta(gX \in B) = \prob_{\bar g \theta}(X \in B), \quad \text{$(\theta,g) \in \TT \times \G$, measurable $B \subseteq \XX$}. 
\end{equation}
The most common example of this is where the distribution of $X$ depends on a location parameter $\theta$ and, consequently, the distribution of $X+a$ depends on the location parameter $\theta + a$.  When the statistical model $\{\prob_{\theta}: \theta \in \TT\}$ satisfies \eqref{eq:invariant}, it's called an {\em invariant statistical model}.  A good textbook presentation on this is \citet[][Ch.~6]{schervish1995}; a more comprehensive account of the theory and applications of invariant statistical models is \citet{eaton1989}.  Here I'll present only the necessary details. 

% What about when the group $\G$ is the parameter space?  That is, start with a fixed distribution $\prob$ and define the family as all those distributions of $gX$ for $g \in \G$ and $X \sim \prob$... 

Define $\Gbar$ as the collection of all those bijections $\bar g: \TT \to \TT$, corresponding to the mappings $g \in \G$.  It is easy to check that $\Gbar$ is itself a group.   As is often the case in applications, I'll assume that the distributions $\prob_{\theta}$ all have a density respect to some underlying $\sigma$-finite measure.  So I'll follow the suggestion\footnote{Eaton's Theorem~3.1 says that \eqref{eq:invariant.density} implies \eqref{eq:invariant} but, the converse is only ``almost'' true. That is, \eqref{eq:invariant} and existence of densities implies \eqref{eq:invariant.density} on a null set that can depend on $(\theta,g)$. Eaton goes on to say that this null set dependence on $(\theta,g)$ can be avoided in all the applications that he's familiar with.} from Theorem~3.1 in \citet{eaton1989} and say that the family $\{p_\theta: \theta \in \TT\}$ is invariant with respect to $\G$ if 
\begin{equation}
\label{eq:invariant.density}
p_\theta(x) = p_{\bar g \theta}(gx) \, \chi(g), \quad x \in \XX, \; \theta \in \TT, \; g \in \G, 
\end{equation}
where $\chi(g)$ is the ``multiplier,'' a change-of-variables Jacobian term.  To be clear, these model assumptions aren't necessary to define the fiducial, Bayes, and IM solutions; this just sets a context in which a connection between these solutions can be made. 

The set $\G x = \{gx: g \in \G\} \subseteq \XX$ is called the {\em orbit} of $\G$ corresponding to $x$.  The orbits partition $\XX$ into equivalence classes, so every point $x \in \XX$ falls on exactly one orbit.  This partition can be used to construct a new coordinate system on $\XX$ which will be useful for us in what follows.  Identify $x \in \XX$ with $(g_x, u_x)$, where $u_x \in \UU$ denotes the label of orbit $\G x$ and $g_x \in \G$ denotes the position\footnote{The position is relative to some predetermined reference point on the orbit.  If $r_x$ is that reference point on $\G x$, then $g_x$ is defined such that $x = g_x r_x$.} of $x$ on the orbit $\G x$. 

Henceforth, I'm going to sacrifice a bit of generality for the sake of readability.  As is common in the literature, instead of just assuming that $\G$ and $\Gbar$ are isomorphic, I'm going to follow \citet[][p.~371]{schervish1995} and assume that $\TT = \G = \Gbar$; this means we don't have to distinguish $g$, $\bar g$, and $\theta$ and we don't have to track functions that connect the three.  With this context in mind, let's agree on the following 

\begin{asmp}
Let $\{p_\theta: \theta \in \TT\}$ be a family of densities invariant with respect to a locally compact topological group $\G$ in the sense of \eqref{eq:invariant.density} and, as explained above, take $\TT = \G = \Gbar$.  In addition, the following hold:
\begin{itemize}
\item[A1.] The left Haar measure $\lambda$ and the corresponding right Haar measure $\rho$ on (the Borel $\sigma$-algebra of) $\G$ exist and are unique up to scalar multiples.
\vspace{-2mm}
\item[A2.] There exists a bijection $t: \XX \to \G \times \UU$, with both $t$ and $t^{-1}$ measurable, that maps $x \in \XX$ to its position--orbit coordinates $(g_x, u_x) \in \G \times \UU$.  
\vspace{-2mm}
\item[A3.] The distribution of $t(X) = (G, U) \in \G \times \UU$ induced by the distribution of $X \sim \prob_{X|\theta}$ has a density with respect to $\lambda \times \mu$ for some measure $\mu$ on $\UU$.
\end{itemize} 
\end{asmp}

A few quick remarks about these assumptions are in order.  First, it's not necessary that $\TT = \G = \Gbar$, only that they're effectively the same, i.e., $\G$ and $\Gbar$ are isomorphic and that there's a certain bijection that relates $\TT$ and $\Gbar$. Next, for A1, existence and uniqueness of the left and associated right Haar measures on locally compact topological groups is a classical result and I'll refer the reader to the corresponding classical texts: \citet{halmos.measure} and \citet{nachbin1965}.  For A2, note that $t(gx) = (g \circ g_x, u_x)$ for all $g \in \G$.  That is, $g$ only acts on the first coordinate in $t$, so it's invariant with respect to $\G$ in the second coordinate---the orbit label is unaffected by transformations in $\G$.  Finally, for A3, existence of a joint density with respect to a product measure simply ensures that there will be no difficulty in defining a conditional distribution for $G$, given $U=u$; moreover, $U=U_X$, as a function of $X \sim \prob_{\theta}$ is an ancillary statistic.
%, i.e., its distribution doesn't depend on $\theta$. 

%\begin{itemize}
%\item It's not necessary that $\TT = \G = \Gbar$, only that they're effectively the same, i.e., $\G$ and $\Gbar$ are isomorphic and that there's a certain bijection that relates $\TT$ and $\Gbar$.  
%\vspace{-2mm}
%\item For A1, existence and uniqueness of the left and associated right Haar measures on locally compact topological groups is a classical result and I'll refer the reader to the corresponding classical texts, namely, \citet{halmos.measure} and \citet{nachbin1965}.  
%\vspace{-2mm}
%\item For A2, note that $t(gx) = (g \circ g_x, u_x)$ for all $g \in \G$.  That is, $g$ only acts on the first coordinate in $t$, so it's invariant with respect to $\G$ in the second coordinate---the orbit label is unaffected by transformations in $\G$.  
%\vspace{-2mm}
%\item For A3, existence of a joint density with respect to a product measure simply ensures that there will be no difficulty in defining a conditional distribution for $G$, given $U=u$.  Moreover, it can be shown that $X \mapsto U_X$, as a function of $X \sim \prob_{X|\theta}$ is an ancillary statistic, i.e., its distribution doesn't depend on $\theta$. 
%\end{itemize}

Perhaps the simplest example is that of a location parameter where $\G = \Gbar = (\RR, +)$.  Since this group is abelian, the left and right Haar measures are the same and both equal to Lebesgue measure.  The function $x \mapsto t(x)$ in A2 consists of two components: in its ``$g_x$'' coordinate an equivariant function of $x$ that estimates the location and, in its ``$u_x$'' component, an invariant function of $x$, such as residuals.  For example, $g_x = \bar x$ the arithmetic mean of $x=(x_1,\ldots,x_n)$ and $u_x = \{x_i - \bar x: i=1,\ldots,n\}$.  Note that the $u_x$ coordinate satisfies a constraint, so, after it's represented in a suitable lower-dimensional space $\UU$, $\mu$ can be taken as Lebesgue measure there.  This example is insightful but also too simple.  There are many other problems that fit this general form; see, e.g., Section~\ref{SS:circle} below and Chapters~1--2 of \citet{fraser1968}, including the exercises.

%A function $f$ on $\XX$ is called {\em invariant} (with respect to $\G$) if $f(gx) = f(x)$ for all $g \in \G$, i.e., if $f$ is constant on orbits of $\G$.  A function $h: \XX \to \XX$ is called {\em equivariant} (with respect to $\G$} if $h(gx) = gh(x)$, i.e., it's the same if the transformation $g$ is applied before or after $h$ is applied.  

\subsection{Fiducial and IM solutions}
\label{SS:im.group}

Under the above-described invariant statistical model setup, there is a standard/accepted fiducial distribution construction, which I'll describe below.  It also turns out that the IM solution as presented in Section~\ref{SS:im} is relatively straightforward in this case too.  

Just a quick remark on notation before getting started.  Recall that, for simplicity, we're assuming $\TT = \G = \Gbar$.  In this case, generic values of $\theta \in \TT$ can be identified as transformations in $\G$; the same can be said for the unknown value of the uncertain variable $\Theta$.  So, in what follows, I'll treat $\theta$ as a transformation that maps $x$ to $\theta x$, can be inverted to $\theta^{-1}$, and can be composed via $\circ$ with other transformations in $\G$.  

The key result here is that presented in Corollary~6.64 of \citet{schervish1995}.  In particular, it says that the density of $X$ under $\prob_{\theta}$ or, equivalently, the joint density of $t(X) = (G,U) \in \G \times \UU$, is given by 
\begin{equation}
\label{eq:GU.joint}
p_\theta(g, u) = f(\theta^{-1} \circ g, u), \quad \text{$(\lambda \times \mu)$-almost all $(g,u)$}, 
\end{equation}
where $f: \G \times \UU \to \RR$ is a fixed function that doesn't directly depend on $\theta$.  The particular form of $f$ isn't important for us at the moment---all that matters is how it depends on $\theta$.  

There are two important consequences of the characterization \eqref{eq:GU.joint}.  The first is presented as Lemma~6.65 in \citet{schervish1995}.

\begin{prop}
\label{prop:group.fid}
The fiducial distribution $\prior_x$ of $\Theta$, given $X=x$ or, equivalently, given $t(X)=(g,u)$, has a density $q_x$ with respect to right Haar measure on $\G$ given by 
\begin{equation}
\label{eq:fid.density}
q_x(\theta) = c_g \, p_\theta(g \mid u), \quad \theta \in \TT, \quad \text{with $t(x)=(g,u)$}, 
\end{equation}
where $c_g$ is a constant that depends only on $g$, and $p_\theta(g \mid u)$ is the conditional density of $G$, given $U=u$, derived from the joint density in \eqref{eq:GU.joint}.  Also, the fiducial density in \eqref{eq:fid.density} is the same as the Bayesian posterior density under the right Haar prior.  
\end{prop}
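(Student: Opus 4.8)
The plan is to run the classical conditional fiducial argument for invariant models---essentially the route of Schervish's Lemma~6.65---and then check that the resulting density coincides with the right-Haar Bayes posterior; the only real work will be the bookkeeping between left and right Haar measure on $\G$.

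First I would extract the pivotal structure hidden in \eqref{eq:GU.joint}. Conditioning on the ancillary $U=u$ (well-defined by A3), the conditional density of $G$ given $U=u$ with respect to left Haar measure $\lambda$ is $p_\theta(g \mid u) = f(\theta^{-1}\circ g, u)/k(u)$, where $k(u) = \int_\G f(w,u)\,\lambda(dw)$; the point that $k(u)$ does not depend on $\theta$ is exactly left-invariance of $\lambda$ under the substitution $w = \theta^{-1}\circ g$. Consequently $W := \Theta^{-1}\circ G$ has, conditionally on $U=u$, the fixed density $w \mapsto f(w,u)/k(u)$ with respect to $\lambda$, free of $\theta$; that is, $G = \Theta \circ W$ is a group-theoretic data-generating equation with pivot $W$. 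The (conditional) fiducial distribution of $\Theta$ given $t(x)=(g,u)$ is then, by definition, the law of $g \circ W^{-1}$ when $W$ is drawn from this pivot distribution.

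Next I would compute the density of $\Theta = g \circ W^{-1}$ with respect to the right Haar measure $\rho$. Starting from $\prob(\Theta \in E) = \int_{\{\theta^{-1}\circ g \,:\, \theta \in E\}} k(u)^{-1} f(w,u)\,\lambda(dw)$ and performing the change of variable $\theta \mapsto w = \theta^{-1}\circ g$, I would invoke two standard facts about Haar measure: inversion carries left Haar to right Haar, so $\lambda(d(\theta^{-1})) = \rho(d\theta)$ up to the fixed normalization linking $\lambda$ and $\rho$; and right translation scales left Haar by the modular function, $\lambda(d(\sigma \circ g)) = \Delta(g)\,\lambda(d\sigma)$. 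Composing these gives $\lambda(d(\theta^{-1}\circ g)) = \Delta(g)\,\rho(d\theta)$, and hence the fiducial density with respect to $\rho$ is $q_x(\theta) = \Delta(g)\,k(u)^{-1}\,f(\theta^{-1}\circ g, u) = c_g\,p_\theta(g \mid u)$ with $c_g$ a fixed multiple of $\Delta(g)$, which is \eqref{eq:fid.density}; note in particular that $c_g$ depends on the data only through $g$.

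For the Bayes statement, I would use that $t$ is a bijection and $U$ is ancillary, so as a function of $\theta$ the likelihood $L_x(\theta)$ is proportional to $p_\theta(g \mid u)$, hence to $f(\theta^{-1}\circ g, u)$, up to a $\theta$-free factor; the posterior density with respect to $\rho$ under the right Haar prior $\rho$ is therefore $f(\theta^{-1}\circ g,u)$ divided by its $\rho$-integral over $\G$. Running the same change of variable in reverse yields $\int_\G f(\theta^{-1}\circ g, u)\,\rho(d\theta) = \Delta(g)^{-1} k(u) < \infty$, which reproduces exactly the $q_x$ of the previous paragraph, so the right-Haar posterior is proper and equals the fiducial density. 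The main obstacle I anticipate is entirely this Haar/modular-function bookkeeping---pinning down the left-versus-right conventions and the normalization between $\lambda$ and $\rho$, and confirming that the surviving constant is a function of $g$ alone and not of $u$; the rest is a routine change of variables.
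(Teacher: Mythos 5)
Your proposal is correct and follows essentially the same route as the paper, which proves this proposition by citing Schervish's Lemma~6.65 together with the pivot description $H=\Theta^{-1}\circ G$ sketched immediately after the statement; your conditional-pivot construction plus the Haar change-of-variables (inversion to pass from $\lambda$ to $\rho$, right translation producing the modular factor) is exactly that argument written out. The only loose end---whether the surviving constant is $\Delta(g)$ or $\Delta(g)^{-1}$---is a convention issue you already flag, and it is harmless since either way the constant depends on $g$ alone, which is all \eqref{eq:fid.density} requires.
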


Here's what's going on behind the scenes.  If $G$ is as in $t(X)=(G,U)$, where $X \sim \prob_{\theta}$, then the conditional distribution of $H := \theta^{-1} \circ G$ in $\G$, given $U=u$, doesn't depend on $\theta$. Therefore, $H$ is a {\em pivot}, a function of $(X,\theta)$ having distribution free of $\theta$.  Then the fiducial distribution for $\Theta$, given $X=x$ or, equivalently, given $(G,U)=(g,u)$, is 
\[ \prior_x(A) = \prob(g \circ H^{-1} \in A \mid U=u), \quad A \subseteq \TT, \]
where the probability on the right-hand side is with respect to the  conditional distribution of $H$, given $U=u$, derived from $\prob_\theta$, which doesn't depend on $\theta$.  If that conditional distribution is known, then the fiducial probabilities can be directly evaluated or approximated via Monte Carlo. The Bayes connection can be of some practical benefit, instead of working with the $(H \mid U=u)$ conditional distribution, one can apply any of the Monte Carlo methods commonly used to approximate Bayesian posterior distributions.  

The next result, which is related to the ideas in the previous paragraph, is the second important consequence of \eqref{eq:GU.joint}, relevant to the IM solution. 

\begin{prop}
\label{prop:group.rellik}
The relative likelihood function $R(x,\theta)$ in \eqref{eq:eta} depends on $(\theta^{-1} \circ g, u)$ only, where $t(x)=(g,u)$.  In particular, if $t(x) = (g,u)$, then 
\[ R(x,\theta) = d_u \, f(\theta^{-1} \circ g, u), \]
where $d_u$ is a constant that depends only on $u$.  
\end{prop}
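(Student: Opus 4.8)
The plan is to move the relative likelihood through the coordinate change $t$ and then read off the group structure from \eqref{eq:GU.joint}. First I would note that $R(x,\theta)$ in \eqref{eq:eta} is unchanged if $L_x(\cdot)$ is replaced by any function proportional to it in $\theta$; since $t:\XX\to\G\times\UU$ is a fixed measurable bijection not depending on $\theta$, the likelihood $L_x(\theta)$ and the $(\lambda\times\mu)$-density $p_\theta(g,u)$ of $t(X)=(G,U)$ differ exactly by a factor that depends on $x$ but not on $\theta$ (a change of dominating measure plus transport of mass under the bijection). Consequently
\[
R(x,\theta) \;=\; \frac{p_\theta(g,u)}{\sup_{\vartheta\in\TT} p_\vartheta(g,u)}, \qquad t(x)=(g,u).
\]
Substituting the characterization $p_\theta(g,u)=f(\theta^{-1}\circ g,u)$ from \eqref{eq:GU.joint} into numerator and denominator gives
\[
R(x,\theta) \;=\; \frac{f(\theta^{-1}\circ g,\,u)}{\sup_{\vartheta\in\TT} f(\vartheta^{-1}\circ g,\,u)}.
\]

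Next I would simplify the denominator using $\TT=\G$. For fixed $g\in\G$, the map $\vartheta\mapsto\vartheta^{-1}\circ g$ is a bijection of $\G$ onto itself (its inverse is $h\mapsto g\circ h^{-1}$), so reindexing by $h=\vartheta^{-1}\circ g$ yields
\[
\sup_{\vartheta\in\TT} f(\vartheta^{-1}\circ g,\,u) \;=\; \sup_{h\in\G} f(h,u),
\]
which involves neither $g$ nor $\theta$. Setting $d_u := \bigl(\sup_{h\in\G} f(h,u)\bigr)^{-1}$—a quantity that is strictly positive and finite because a maximum likelihood estimator $\hat\theta_x$ exists, so the supremum in \eqref{eq:eta} is attained and positive—I obtain $R(x,\theta)=d_u\,f(\theta^{-1}\circ g,u)$. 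In particular $R(x,\theta)$ depends on $(g,u)$ only through $(\theta^{-1}\circ g,u)$, which is the claim.

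The only genuinely delicate point I expect is the bookkeeping in the reduction to the $(G,U)$-model together with the $(\lambda\times\mu)$-almost-everywhere qualifier attached to \eqref{eq:GU.joint}: one must be sure that the passage from the density of $X$ to the density of $t(X)$ contributes only a $\theta$-free factor, and that the null set in \eqref{eq:GU.joint} can be taken independent of $\theta$ (or else argue at the observed data point). Both are handled exactly as in the invariant-models literature surrounding Corollary~6.64 of \citet{schervish1995}—one works with the distinguished version of the density for which \eqref{eq:GU.joint} holds—and the footnoted remark of Eaton quoted earlier in the excerpt covers precisely this caveat. Everything past that reduction is the one-line reindexing of a supremum over the group carried out above.
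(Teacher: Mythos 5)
Your proposal is correct and follows essentially the same route as the paper's proof: rewrite $R(x,\theta)$ as the ratio $p_\theta(g,u)/\sup_\vartheta p_\vartheta(g,u)$ via the bijection $t$, substitute the characterization \eqref{eq:GU.joint}, and observe that reindexing by $\vartheta^{-1}\circ g$ makes the supremum in the denominator free of $g$ and $\theta$, so $1/d_u$ can be taken equal to that supremum. Your extra care about the $\theta$-free Jacobian factor in passing from $L_x(\theta)$ to $p_\theta(g,u)$ and about the almost-everywhere qualifier in \eqref{eq:GU.joint} is a reasonable elaboration of what the paper leaves implicit in the phrase ``since $x \mapsto t(x)$ is a bijection.''
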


\begin{proof}
Since $x \mapsto t(x)$ is a bijection, by \eqref{eq:GU.joint} we have 
\[ R(x,\theta) = \frac{p_\theta(g,u)}{\sup_{\vartheta \in \TT} p_\vartheta(g,u)} = \frac{f(\theta^{-1} \circ g, u)}{\sup_{\vartheta \in \TT} f(\vartheta^{-1} \circ g, u)}. \]
Since $\vartheta$ is free to vary across all of $\TT$ or, equivalently, all of $\G$, so too is $\vartheta^{-1} \circ g$.  This implies that the supremum on the right-hand side doesn't depend on $g$.  The claim follows by taking $1/d_u$ equal to that supremum. 
\end{proof}

The key observation is that, since $R(X,\theta)$ depends on $X$ only through $(\theta^{-1} \circ G, U)$, and $H := \theta^{-1} \circ G$ is a pivot, it follows that $R(X,\theta)$ is a pivot too.  The relative likelihood is a pivot in many of the familiar textbook problems, so this is not a surprising result.  Even beyond these invariant statistical models, Wilks's theorem implies that the relative likelihood is an asymptotic pivot, under certain regularity conditions.  In any case, it's now straightforward to write down the IM solution's contour function:
\[ \pi_x(\theta) = \prob\{ f(H, u) \leq f(\theta^{-1} \circ g, u) \mid U=u\}, \quad \theta \in \TT, \]
where the probability on the right is the conditional distribution of $H$, given $U=u$, derived from $\prob_\theta$, which doesn't depend on $\theta$.  Analogous to the formulation in Section~\ref{SS:im}, I can let $\uPi_x(A) = \sup_{\theta \in A} \pi_x(\theta)$ define the IM's possibility measure output. Where it's helpful, below I'll write ``$\pi_{g|u}$'' and ``$\uPi_{g|u}$'' to emphasize that the IM output depends on the observed $x$ only through $t(x)=(g,u)$, and that $u$ is fixed at its observed value. With this notation, the contour clearly satisfies
\[ \prob_{\Theta}\{ \uPi_{G|u}(\Theta) \leq \alpha \mid U=u\} \leq \alpha, \quad \alpha \in [0,1], \]
where $(G,U)$ on the right-hand side depend implicitly on $X \sim \prob_\Theta$. From here we get the following conditional version of the validity property \eqref{eq:valid}: 
\[ \sup_{\theta \in A} \prob_\theta\{ \uPi_{G|u}(A) \leq \alpha \mid U=u \} \leq \alpha, \quad \alpha \in [0,1], \quad A \subseteq \TT. \]

\subsection{Connection}
\label{SS:main}

Roughly speaking, the main result here says that, in the invariant statistical model setting, the fiducial distribution $\prior_x$ described above is not only a member of the IM's credal set $\cred(\uPi_x)$ but is a ``maximal'' such member in the sense of being most diffuse, having the heaviest tails, etc.  First I need to define this notion of maximality precisely.  This will be defined in terms of a generic uncertain variable $Y$, taking values in a space $\YY$, about which uncertainty is quantified via a possibility measure $\Pi$.  Recall that the credal set $\cred(\Pi)$ for a possibility measure $\Pi$ supported on $\YY$ is the set of all probability measures it dominates, i.e., $\cred(\Pi) = \{\prob \in \text{prob}(\YY): \prob(\cdot) \leq \Pi(\cdot)\}$.

\begin{defn}
\label{def:maxdif}
Given a possibility measure $\Pi$ on a space $\YY$ with contour $\pi$, a probability measure $\prob$ supported on $\YY$ is a {\em maximal} element in the credal set $\cred(\Pi)$ if 
\[ \prob\{\pi(Y) \leq \alpha\} = \alpha, \quad \text{for all $\alpha \in [0,1]$}. \]
\end{defn}

The well-known characterization \citep[e.g.,][]{cuoso.etal.2001, destercke.dubois.2014} of the credal set of a possibility measure says that $\prob \in \cred(\Pi)$ if and only if
\[ \prob\{\pi(Y) \leq \alpha\} \leq \alpha, \quad \text{all $\alpha \in [0,1]$}, \]
i.e., if the $\prob$-probability assigned to the $\alpha$-lower level sets of $\pi$ is no more than $\alpha$ for every $\alpha \in [0,1]$.  So a maximal element of the credal set is maximal in the sense that it assigns exactly the maximal probability to these lower level sets.  Alternatively, the maximality condition implies that $\pi$ characterizes the tails of $\prob$ in the sense that 
\[ \pi(y) = \prob\{\pi(Y) \leq \pi(y)\}, \quad y \in \YY. \]

The following result establishes an apparently new connection between the fiducial distribution and the possibilistic output for the valid IM described above in the invariant statistical model setting.  Specifically, the fiducial distribution $\prior_x$ is a maximal member of the IM's credal set $\cred(\uPi_x)$.  That is, for each $x \in \XX$,
\begin{equation}
\label{eq:fid.max}
\prior_x\{\pi_x(\Theta) \leq \alpha\} = \alpha, \quad \text{for all $\alpha \in [0,1]$}. 
\end{equation}
Note that $\prior_x$ and $\pi_x$ depend on $x$ only through $t(x)=(g,u)$. Recall that, for the models under consideration here, the fiducial, generalized fiducial, and default-prior Bayes solutions all agree, so my comparison with the IM output applies to all of these.  

\begin{thm}
\label{thm:char}
Under the setup described in Sections~\ref{SS:invariant}--\ref{SS:im.group}, for each fixed $x$, the fiducial distribution $\prior_x$ is a maximal member of the IM's credal set $\cred(\uPi_x)$.
\end{thm}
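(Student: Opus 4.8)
The plan is to reduce the maximality identity \eqref{eq:fid.max} to a single application of the probability integral transform, using the two pivotal representations already in hand. First I would unpack what $\prior_x$ looks like as a distribution on $\TT=\G$. By Proposition~\ref{prop:group.fid} and the ``behind the scenes'' discussion immediately following it, when the data---hence $t(x)=(g,u)$---is held fixed, $\prior_x$ is the law of $g\circ H^{-1}$, where $H$ follows $\nu_u$, the conditional distribution of the pivot $H=\theta^{-1}\circ G$ given $U=u$ (which, by the pivot property, does not depend on $\theta$). A one-line group computation then gives, under this representation, $\Theta^{-1}\circ g=(g\circ H^{-1})^{-1}\circ g=H\circ g^{-1}\circ g=H$; that is, the argument $\theta^{-1}\circ g$ that appears throughout Propositions~\ref{prop:group.fid}--\ref{prop:group.rellik} becomes exactly the pivot $H$ once $\theta$ is drawn from $\prior_x$.

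Next I would rewrite the contour in these terms. Combining Proposition~\ref{prop:group.rellik} with the contour formula $\pi_x(\theta)=\prob\{f(H,u)\le f(\theta^{-1}\circ g,u)\mid U=u\}$ recorded in Section~\ref{SS:im.group} (the constant $d_u$ cancels inside the inequality), we have $\pi_x(\theta)=F_u\!\bigl(f(\theta^{-1}\circ g,u)\bigr)$, where $F_u$ denotes the conditional distribution function of the real-valued random variable $f(H,u)$ under $\nu_u$. Substituting $\theta=\Theta\sim\prior_x$ and the identity $\Theta^{-1}\circ g=H$ from the previous step, $\pi_x(\Theta)$ has, under $\prior_x$, exactly the law of $F_u\!\bigl(f(H,u)\bigr)$ with $H\sim\nu_u$---i.e., the probability integral transform of $f(H,u)$ through its own distribution function. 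Hence
\[ \prior_x\{\pi_x(\Theta)\le\alpha\} = \nu_u\bigl\{\,F_u(f(H,u))\le\alpha\,\bigr\}, \qquad \alpha\in[0,1], \]
and the right-hand side equals $\alpha$ for every $\alpha$ by the probability integral transform, which is \eqref{eq:fid.max} and, by Definition~\ref{def:maxdif}, the theorem. Note that the ``$\le\alpha$'' half of this is nothing new: it is exactly the statement $\prior_x\in\cred(\uPi_x)$, which follows from the characterization of a possibility measure's credal set together with the IM's validity \eqref{eq:valid}. So the substantive content is the reverse inequality, equivalently the continuity of $F_u$, i.e., that $R(X,\theta)$ has an atomless distribution under $\prob_\theta$; this is the natural regularity condition in this setting (and is implicit in the standard pivot-based fiducial treatments), since $f(\cdot,u)$ is proportional to a genuine conditional density of $G$ given $U=u$.

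The one place where care is genuinely needed is the passage from Proposition~\ref{prop:group.fid} to the representation $\prior_x=\text{law of }g\circ H^{-1}$. If one takes that representation as given---it is stated explicitly in the discussion after Proposition~\ref{prop:group.fid}---then the argument above is essentially complete and the only real ingredient is the probability integral transform. If instead one wants to re-derive it from the right-Haar density formula $q_x(\theta)=c_g\,p_\theta(g\mid u)$, one must check that, under the map $\theta\mapsto\theta^{-1}\circ g=:h$, this density transports to the density $\propto f(\cdot,u)$ of $\nu_u$ with respect to \emph{left} Haar measure, i.e., that the change-of-variables factor relating right and left Haar measure under this inversion-and-left-translation depends only on $g$ and therefore cancels against the normalizing constant. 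This is exactly where assumption~A1 and the ``with respect to right Haar measure'' in Proposition~\ref{prop:group.fid} do their work, and it follows from the classical facts that inversion carries left Haar measure to right Haar measure and that left translation rescales right Haar measure by a modular factor depending only on the translating element---a routine computation I would relegate to a short lemma or a parenthetical remark.
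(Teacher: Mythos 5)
Your proof is correct and follows essentially the same route as the paper's: both represent $\Theta\sim\prior_x$ as $g\circ H^{-1}$ with $H$ distributed according to the conditional pivot law given $U=u$, use the cancellation $(g\circ h^{-1})^{-1}\circ g=h$ to turn $\pi_x(\Theta)$ into the probability integral transform of $f(H,u)$ (the paper phrases this via the $\alpha$-quantile $z_\alpha(u)$ rather than the distribution function $F_u$, which is the same step), and conclude the probability equals $\alpha$. Your explicit remarks on the continuity of $F_u$ and on transporting the right-Haar density to the pivot's law are refinements of points the paper leaves implicit, not a different argument.
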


\begin{proof}
The fiducial $\prior_x$-probability can be rewritten as 
\[ \prior_x\{\pi_x(\Theta) \leq \alpha\} = \prob\{ \pi_{g|u}(g \circ H^{-1}) \leq \alpha \mid U=u\}. \]
Let $z_\alpha(u)$ denote the $\alpha$-quantile of the conditional distribution of $f(H,u)$, given $U=u$; this doesn't depend on the unknown parameter.  Since $\pi_{g|u}(g \circ h^{-1}) \leq \alpha$ if and only if $f((g \circ h^{-1})^{-1} \circ g, u) \equiv f(h,u) \leq z_\alpha(u)$, it follows that 
\begin{align*}
\prior_x\{\pi_x(\Theta) \leq \alpha\} & = \prob\{ \pi_{g|u}(g \circ H^{-1}) \leq \alpha \mid U=u\} \\
& = \prob\{ f(H, u) \leq z_\alpha(u) \mid U=u\} \\
& = \alpha, 
\end{align*}
which proves the claim \eqref{eq:fid.max}.  
\end{proof}

Theorem~\ref{thm:char} shows that when the statistical model is invariant with respect to a group of transformations, the standard fiducial solution (which agrees with the default-prior Bayesian solution, among others) is the best probabilistic approximation to the IM's possibilistic output.  This eliminates any mystery surrounding the fiducial solution---it's a precise-probabilistic approximation to the valid and inherently imprecise-probabilistic IM output. That is, the fiducial argument isn't magically solving the problem of prior-free probabilistic inference within the theory of precise probability, it's just the ``best one can do'' within the theory of precise probability.  Common sense suggests that there's no free lunch and, indeed, the cost of artificially introducing precision is that one must limit the scope of questions that can be reliably answered; see Section~\ref{S:false.confidence} below.  It's true that the IM's possibilistic output lacks the richness or granularity of probabilistic output, but my claim is that, on its own, without the assistance of prior information, data can only reliably support possibilistic reasoning.  Pushing data any further than this, as the fiducial-like solutions do, creates a risk of inferences being unreliable.  

%{\color{red} Somewhere make the point that possibility is like confidence and, if inferences stronger than those based on confidence could be made reliably, then surely this would've been discovered by now and we wouldn't still be teaching students almost exclusively about confidence intervals, hypothesis tests, etc.....} 

There's further insights coming from Theorem~\ref{thm:char} on the philosophical/conceptual level.  As I mentioned in Section~\ref{SS:setup}, the starting point is that there are no non-trivial, {\em a priori} probability statements that can be made about $\Theta$.  So, whether it be via the fiducial argument or something else, the data-dependent probabilities come out of thin air.  In the invariant model case, one might be able to justify the use of these probabilities based on the fact that there's a consensus---most/all of the available approaches identify the same posterior distribution.  But this doesn't help to understand what the probabilities {\em mean}.  They're clearly neither subjective nor objective/frequentist and, to my knowledge, there is no agreed-upon interpretation for these kinds of probabilities.  This isn't a surprise in light of Theorem~\ref{thm:char}.  The result says that what's driving the solution is the weaker, IM-based possibilistic uncertainty quantification. The fiducial probability is just one of many precise probabilities compatible with the IM solution's possibility measure and, therefore, can't have any genuine meaning.

%\begin{itemize}
%\item Structural models are those where the fiducial argument can be made rigorous, as in \citet{fraser1968}.  In these cases, my claim (confirmed in location parameter problems) is that Fraser's fiducial distribution is the maximal inner probabilistic approximation of the possibility measure $\uPi_y$ produced by the likelihood-based IM construction.  This result is interesting because it highlights the connection between the strongly valid IM and the fiducial argument, supporting my high-level claim that imprecise probability and IMs are what Fisher was after all along.  It also explains the ``success'' of fiducial in these cases, since this maximal inner approximation will, by definition, yield nominal confidence regions; and since all (regular) problems take the form of a location parameter model in the asymptotic limit, it's no surprise that the fiducial argument and its variations applied to the normal limit experiment would produces very good solutions.
%\end{itemize} 

\subsection{Example}
\label{SS:circle}

To illustrate the above result, and to help shed light on the practical side of this abstract formulation, consider an example representative of those encountered in directional data analysis.  The setup I'll consider here is direction measurements on the plane, which can be represented by angles relative to a reference point.  Real-world examples of this include wind direction and animal movement studies; see \citet{mardia.jupp.book} for a comprehensive treatment to the theory, methods, and applications of directional statistics.  More generally, direction measurements can be represented by points on the surface of a hypersphere.  This kind of data is common in astronomy, where the position of planets, stars, etc.~can be described by a point on the celestial sphere.  
%Interestingly, as \citet[][p.~12]{mardia.jupp.book} observe, this is the context Gauss was working in as he developed the theory of least squares, a precursor to the modern theory of statistics that focuses primarily on Euclidean-valued data.  Had Gauss not been working with measurements concentrated in a small region of the sky where an accurate approximation by a tangent plane is available, the history of statistics might have been very different indeed. 

As a very simple illustration, I'll consider the roulette wheel data from Example~1.1 in \citet{mardia.jupp.book}.  The experiment proceeds by spinning a roulette wheel and recording the angle (in radians) of the position at which the wheel stops.  Let $Y^n = (Y_1,\ldots,Y_n)$ be the observable angles for $n$ independent spins.  The actual data in this case, which consists of $n=9$ angles, is presented graphically in Figure~\ref{fig:wheel}(a).  The model under consideration here is the {\em von Mises distribution} which has a density function 
\[ p_\theta(y) = \{2\pi I_0(\kappa)\}^{-1} \exp\{\kappa \cos(y - \theta)\}, \quad y,\theta \in [0, 2\pi). \]
Here $\kappa > 0$ is a known concentration parameter, and $I_0$ denotes the Bessel function of order 0.  It's no trouble allowing $\kappa$ to be unknown as well, but I won't do so here because it takes us outside of the invariant statistical model context considered in Section~\ref{SS:invariant} above.  The parameter $\Theta$ is an unknown mean angle to be inferred from the data.  As a quick check, with $\kappa=2$ taken as fixed, the best-fit model, corresponding to the maximum likelihood estimator of $\Theta$, is shown overlaid on a histogram of the data in Figure~\ref{fig:wheel}(b).  With only $n=9$ data points, there's no reason to doubt the von Mises model.  

\begin{figure}[t]
\begin{center}
\subfigure[Observed angles]{\scalebox{0.6}{\includegraphics{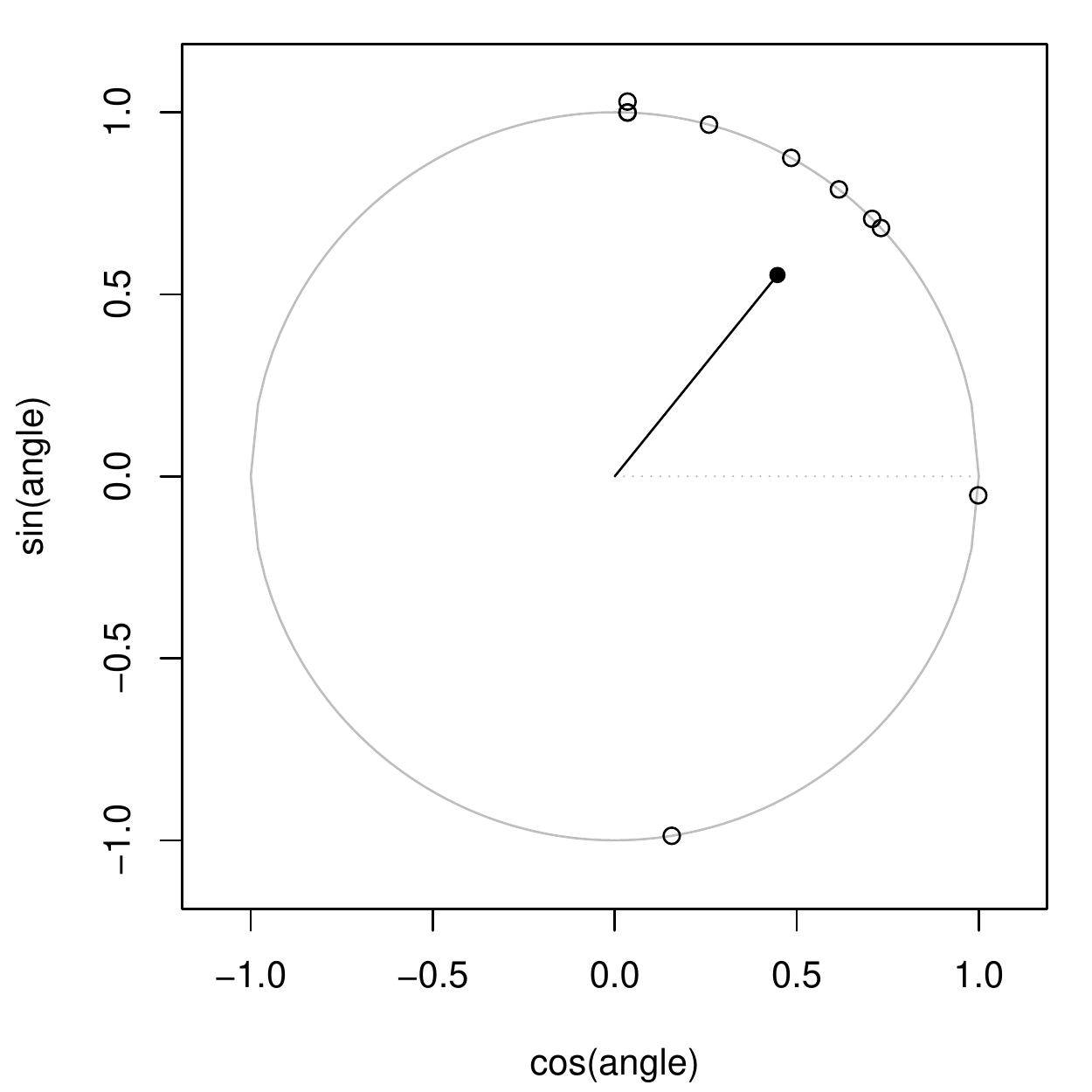}}}
\subfigure[von Mises model fit]{\scalebox{0.6}{\includegraphics{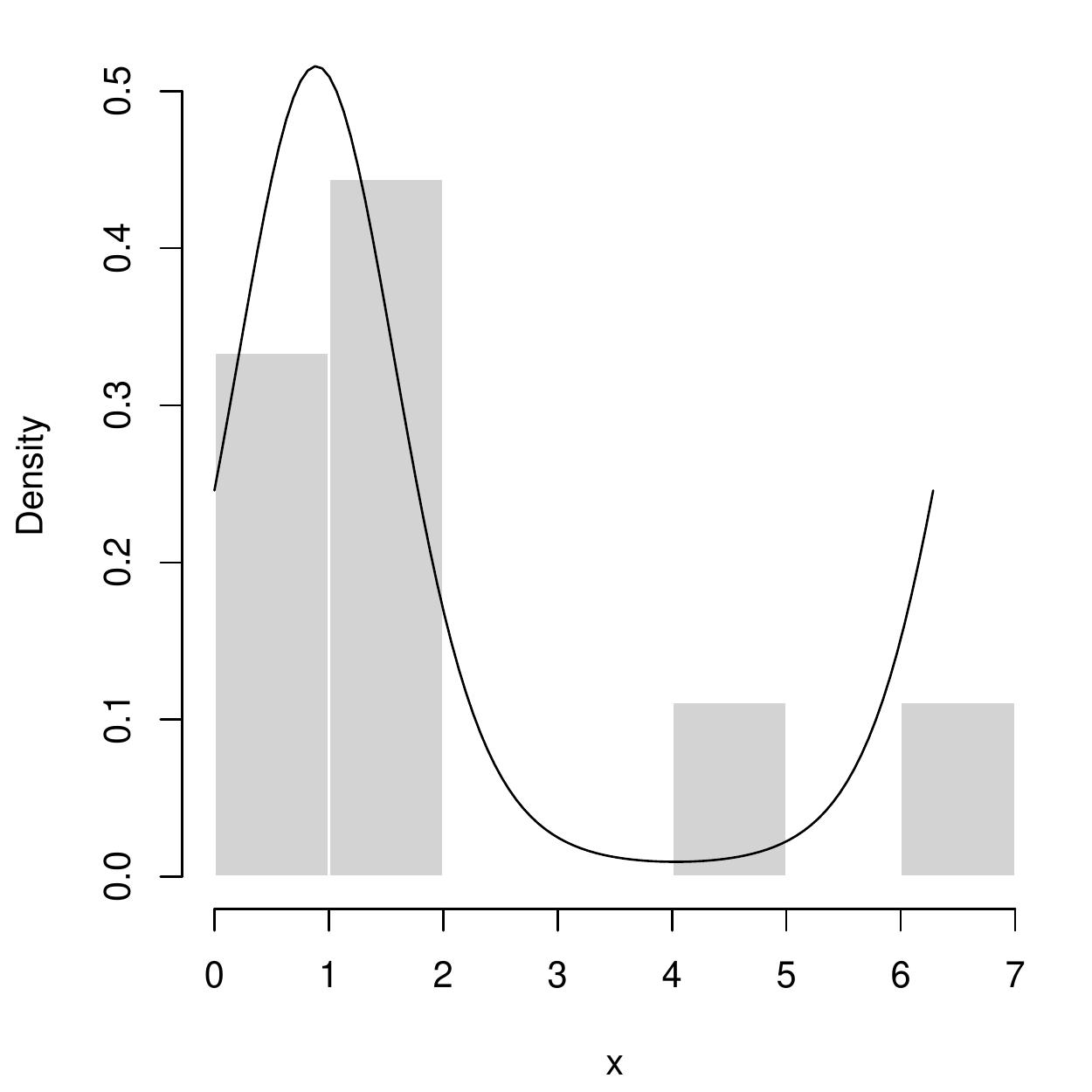}}}
\end{center}
\caption{In Panel~(a), open circles are the Cartesian coordinates corresponding to the observed roulette wheel angles; two duplicate observations near the north pole are stacked. The solid circle is the arithmetic mean of those Cartesian coordinates, and the angle the black line makes with the dotted reference line is the maximum likelihood estimator $G$ of $\Theta$; the black line's length is the sample concentration $U$.}
\label{fig:wheel}
\end{figure}

Digging into this model further, using the cosine formula 
\[ \cos(y - \theta) = \cos y \cos\theta + \sin y \sin \theta, \]
it's easy to see that the minimal sufficient statistic for this model is the pair 
\[ X = (\bar C, \bar S) = \Bigl( \frac1n \sum_{i=1}^n \cos Y_i, \, \frac1n\sum_{i=1}^n \sin Y_i \Bigr). \]
Relative to the posited von Mises model, $X$ can be treated as ``the data.'' Since $(C_i, S_i) = (\cos Y_i, \sin Y_i)$ denotes the Cartesian coordinates of the point on the unit circle corresponding to the angle $Y_i$, it's clear that $X=(\bar C, \bar S)$ is generally inside the unit circle; see Figure~\ref{fig:wheel}(a).  The more ``concentrated'' the original data points are around a particular angle, the closer $X$ will be to the boundary of the circle.  Convert this average position to polar coordinates:
\[ G = \arctan(\bar S / \bar C) \quad \text{and} \quad U = \{ \bar C^2 + \bar S^2\}^{1/2}. \]
Here $G$ is the angle the vector $(\bar C, \bar S)$ makes with the horizontal axis and $U$ is the Euclidean length of $(\bar C, \bar S)$.  It's clear that $(G,U)$ is a bijection of the minimal sufficient statistic $X$, so write $t(X) = (G,U)$.  I'm using the notation $G$ and $U$ to align with that introduced in the previous sections.  That is, there is a group of transformations $\G$ that corresponds to rotations of points in the plane; technically, $\G$ is the special orthogonal group $\text{SO}(2)$ consisting of orthogonal matrices with unit determinant.  Then $G$ can be interpreted as both an angle or as a rotation by that angle.  Similarly, the interior of the unit circle can be partitioned into distinct concentric circles, which correspond to orbits, and $U$ determines which of these orbits $X$ sits on.  
%As in the general theory above, with respect to the group of  rotations, $G$ is equivariant and $U$ is invariant.  

The details here align with the general setup above, so I can proceed to carry out the fiducial (or default-prior Bayes) and IM analyses.  It's a standard result \citep[e.g.,][Eq.~4.5.5]{mardia.jupp.book} in the directional statistics literature that the joint density for $X=(G,U)$, factors as 
\begin{align*}
p_\theta(g, u) & = p_\theta(g \mid u) \, p(u) \\
& = \{2\pi I_0(\kappa u)\}^{-1} \exp\{\kappa u \cos(g - \theta)\} \, p(u),
\end{align*}
where $p(u)$ denotes the marginal density for $U$, which doesn't depend on $\theta$ and isn't relevant here.  The expression for the conditional density of $G$, given $U=u$, is easily seen to be that of a von Mises distribution but with concentration parameter $\kappa u$.  The influence of conditioning on $U$ makes intuitive sense because, as indicated above, the statistic $U$ acts like a measure of how ``concentrated'' the observed angles are around a common angle.  It's the conditional density above that drives both the fiducial and IM analyses.  Indeed, the fiducial density is 
\[ q_x(\theta) \propto \exp\{\kappa u \cos(g - \theta)\}, \quad \theta \in (0,2\pi], \]
and, after simplification, the IM's possibility contour is 
\[ \pi_{g|u}(\theta) = \prob\{ \cos H \leq \cos(g - \theta) \mid U=u\},\quad \theta \in [0,2\pi), \]
where the probability is with respect to the conditional distribution of $H := G-\Theta$, given $U=u$, which is simply a von Mises distribution with mean angle 0 and concentration parameter $\kappa u$.  Plots of the fiducial density (overlaid on a sample from the fiducial distribution) and the IM's possibility contour are shown in Figure~\ref{fig:wheel.out}.  Both point to values of $\Theta$ near the maximum likelihood estimator $g=0.89$ as most plausible, as expected. 

\begin{figure}[t]
\begin{center}
\subfigure[Fiducial distribution]{\scalebox{0.6}{\includegraphics{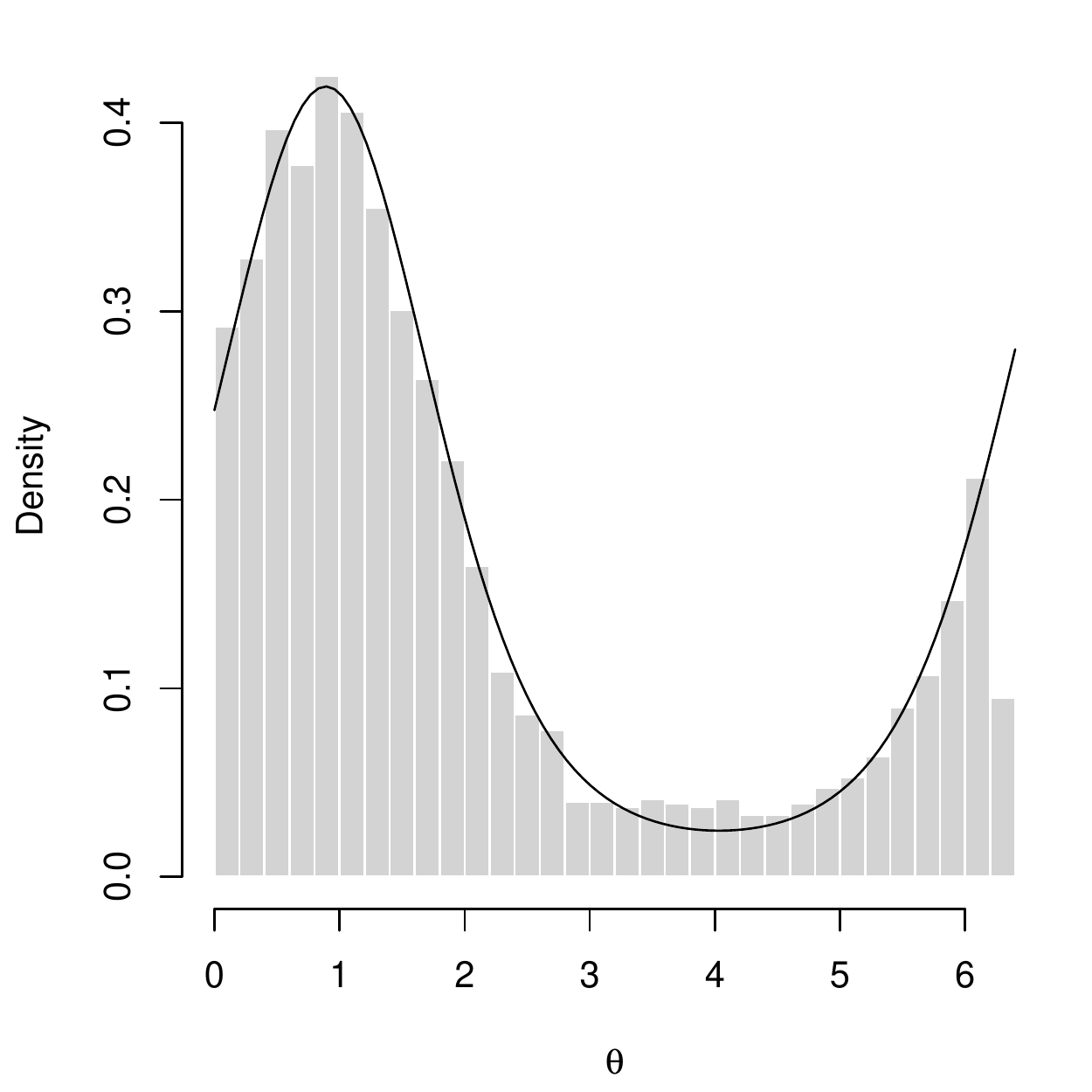}}}
\subfigure[IM possibility contour]{\scalebox{0.6}{\includegraphics{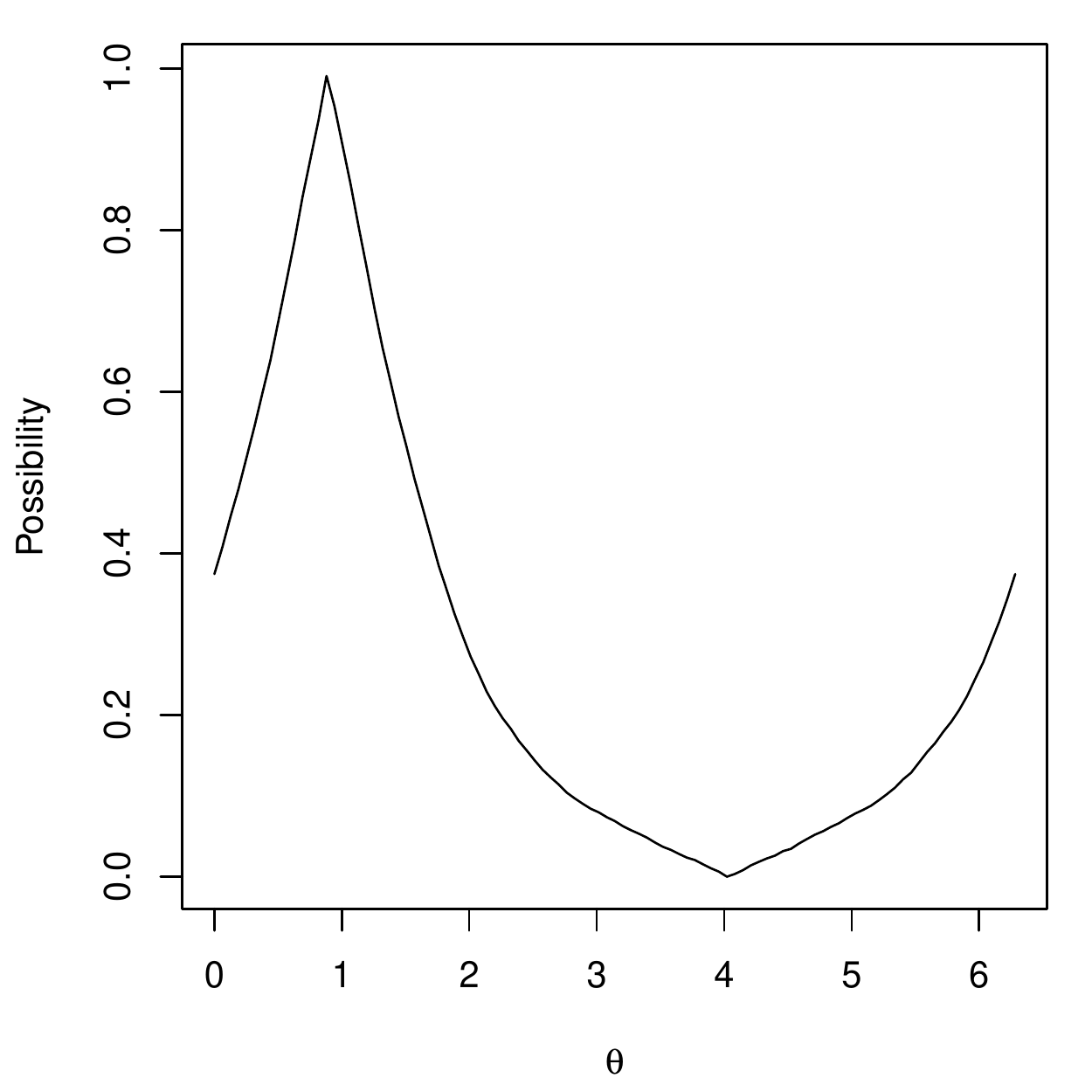}}}
\end{center}
\caption{Fiducial and IM output for inference on the angle $\Theta$ based on the roulette wheel data in Figure~\ref{fig:wheel}.}
\label{fig:wheel.out}
\end{figure}

\section{A new fiducial argument?}
\label{S:new}

Theorem~\ref{thm:char} says that, for a certain class of models, nothing changes if we {\em define} the fiducial solution as the best precise-probabilistic approximation to the IM's possibilistic output, i.e., the maximal element in the IM's credal set.  This alternative perspective on the fiducial argument is appealing for several reasons; here I'll focus on just one.  

Whether it's in the invariant statistical model setting or not, we can always {\em define} the fiducial solution to be the maximal element in the IM's credal set; I suggested a similar approach in \citet[][Sec.~3]{imdec}, and \citet[][Sec.~2]{taraldsen.2021.cd} suggested something similar.  That is, instead of formulating a way to construct a data-dependent distribution $\prior_x$ and hoping that it satisfies certain properties (e.g., a Bernstein--von Mises theorem), let's just define the fiducial distribution to be the best probabilistic approximation of the IM's possibilistic output.  This would ensure that the fiducial distribution's credible sets are genuine confidence sets, which is what ``confidence distributions'' aim to achieve \citep[e.g.,][]{thornton.xie.cd, nadarajah.etal.2015, xie.singh.2012}.  The challenge with the suggested strategy is actually finding the best probabilistic approximation.  Outside the invariant case, the currently-available fiducial solutions likely don't correspond to members of the IM's credal set.  So identifying and numerically evaluating the ``fiducial distribution'' I just defined could be a challenge.  

There are some cases in which this can be carried out.  Presently, I only have ideas for scalar-$\Theta$ cases, but I expect some generalizations are possible.  For example, one can apply the possibility-to-probability transform in \citet[][Sec.~3.2]{dubois.etal.2004} to find a ``maximal'' data-dependent probability distribution $\prior_x$, in the sense of Definition~\ref{def:maxdif} from the IM's possibility measure output $\uPi_x$, provided that the latter is unimodal.  Even in the scalar-$\Theta$ case, this can be quite complicated---I was only able to get a closed-form expression in examples that have the group invariance structure, where I already know how to get the fiducial solution.  Numerical solutions ought to be available more generally, but I haven't investigated this seriously.  
%It remains to be seen what, if any, constructions like this might be available in the multiparameter case.

%\begin{itemize}
%\item A simple derivation works in certain cases.  Given the IM solution's contour $\pi_x(\theta)$ in \eqref{eq:contour}, I suggest to define a fiducial distribution for $\Theta$ by setting 
%\[ \pi_x(\theta) \overset{\text{\tiny set}}{=} 1 - |2 Q_x(\theta) - 1|, \quad \theta \in \TT \subseteq \RR, \]
%and solving for $Q_x(\theta)$.  If there exists a solution $Q_x(\theta)$ that's a {\sc cdf} in $\theta$, then it determines the maximal probabilistic approximation of $\uPi_x$ and, in turn, the fiducial distribution $\prior_x$.  I don't currently have any general results about when this strategy will work, but there are cases where it works and the fiducial solution that obtains is familiar.  ..........
%\item More generally, though still (to my knowledge) in the case of scalar-$\Theta$, one can apply the possibility-to-probability transform in \citet[][Sec.~3.2]{dubois.etal.2004} to derive a ``maximal'' data-dependent probability distribution $\prior_x$, in the sense of Definition~\ref{def:maxdif} from the IM's possibility measure output $\uPi_x$, provided that the latter is unimodal.  Their construction is more complicated than that in the previous bullet, but is also more versatile in the sense that it can be applied more generally. 
%\end{itemize} 
%It remains to see what, if any, constructions of a maximal probability distribution in the IM's credal set are available in the multiparameter case.

To be clear: {\em I'm not advocating for any fiducial argument}.  The validity property described in Section~\ref{SS:im} is essential to the logic of statistical inference and, as shown in \citet{balch.martin.ferson.2017}, there are no data-dependent probability distributions that can achieve it.  My goal here is simply to better understand the fiducial argument.  Indeed, it's now clear that, at least in the class of problems considered above, the fiducial solution corresponds to a maximal probabilistic approximation of the IM's possibilistic solution.  The level sets of the IM's possibility contour are confidence regions and the connection established in Theorem~\ref{thm:char} implies that these agree the fiducial confidence regions.  
%As I suggested above, this idea/definition can be naturally extended to cover cases beyond the invariant statistical models considered here, but this is still semi-speculative.  
The point I want to emphasize is that {\em the connection between fiducial and IMs doesn't imply that the former satisfies the same validity properties as the latter}.  The fact is, fiducial and other precise probabilistic solutions for inference on $\Theta$ are still at risk for false confidence, as I explain in Section~\ref{S:false.confidence}.  The reason is that the maximal probabilistic approximation of a marginal IM generally isn't the corresponding marginal fiducial distribution.  

For example, Figure~\ref{fig:wheel.marg} shows the marginal fiducial (and default-prior Bayes posterior) distribution and marginal IM possibility contour for $\Phi = \cos\Theta$ from the directional data example in Section~\ref{SS:circle}.  Despite the two having the same shapes in Figure~\ref{fig:wheel.out}, it's clear that (non-linear) marginalization has changed their shapes in different ways.  Here the fiducial distribution's mode is pushed fairly close to the boundary, whereas the IM's mode is at the maximum likelihood estimator $\cos 0.89 = 0.63$; since the modes differ, the fiducial distribution is no longer the best probabilistic approximation of the IM's possibility measure.  This post-marginalization discrepancy between the two methods' summaries is the result of differences between the probability and possibility calculi.  The possibility calculus is guaranteed to preserve the IM's validity property, whereas the probability calculus isn't guaranteed to preserve any such properties about the fiducial or default-prior Bayes solution. 

\begin{figure}[t]
\begin{center}
\subfigure[Marginal fiducial samples]{\scalebox{0.6}{\includegraphics{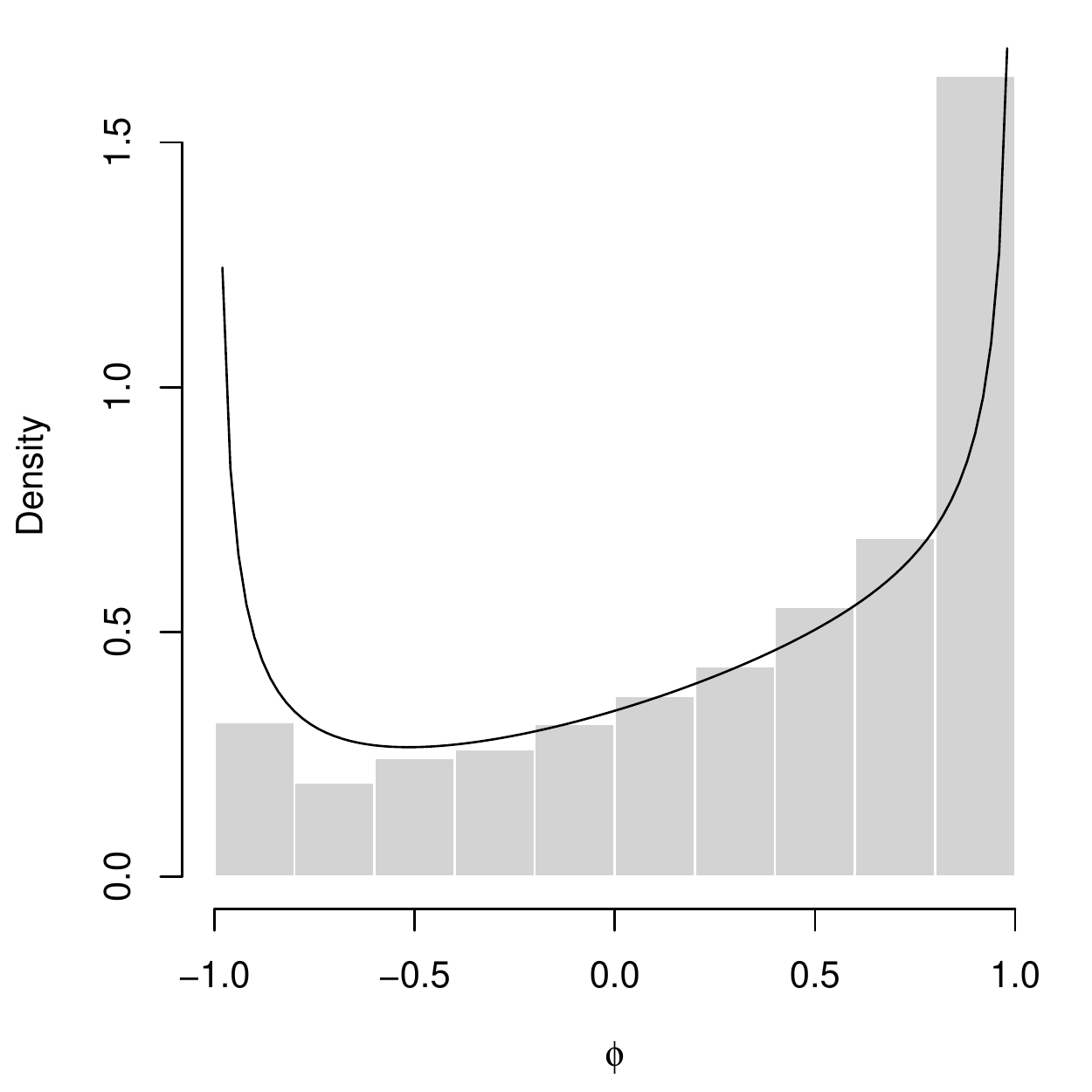}}}
\subfigure[Marginal IM possibility contour]{\scalebox{0.6}{\includegraphics{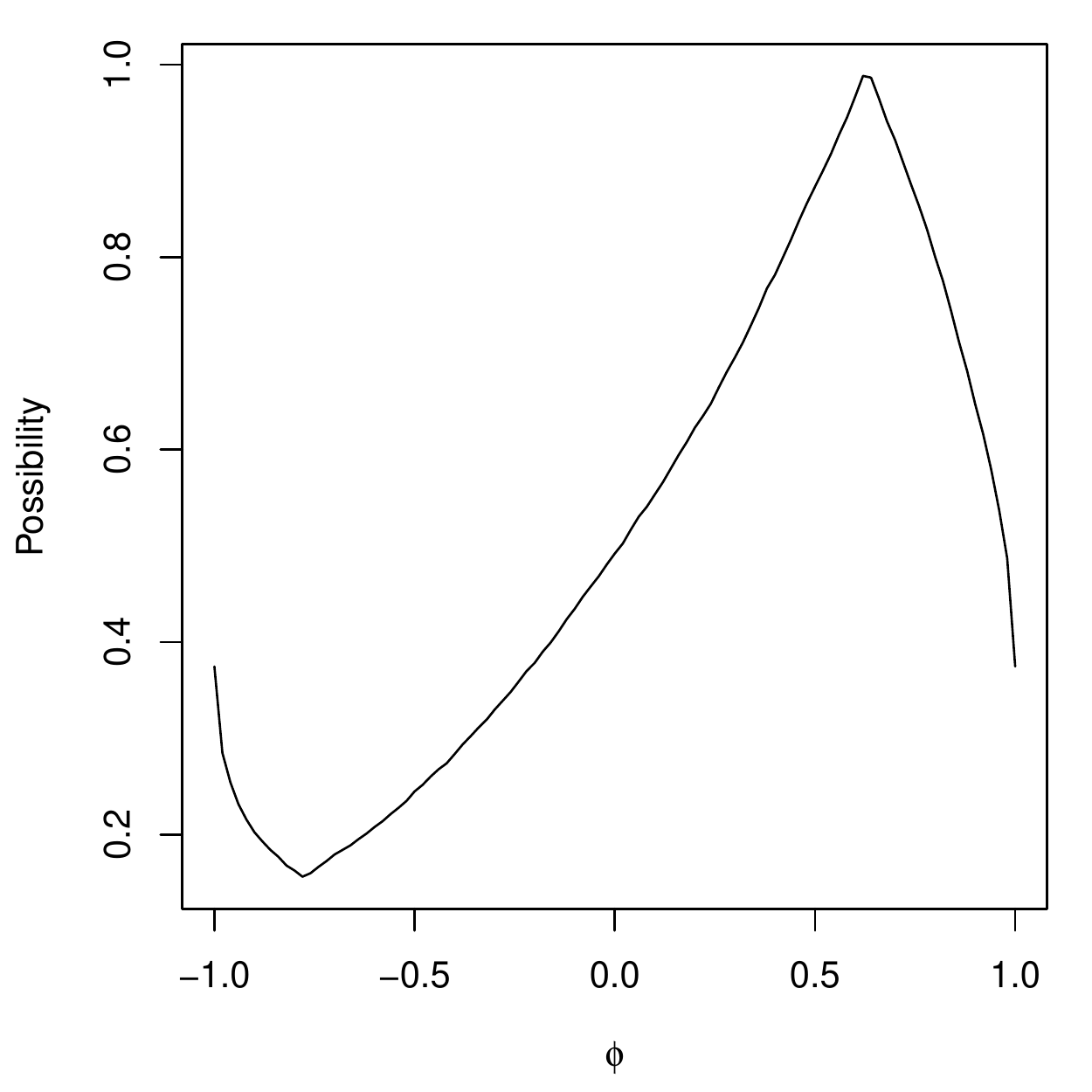}}}
\end{center}
\caption{Marginal fiducial distribution and marginal IM possibility contour for $\Phi = \cos\Theta$ derived from those for $\Theta$ as displayed in Figure~\ref{fig:wheel.out}.}
\label{fig:wheel.marg}
\end{figure}

\section{False confidence redux}
\label{S:false.confidence}

\citet{balch.martin.ferson.2017} show that data-dependent probabilities---Bayes, fiducial, etc.---used for inference suffer from what they call {\em false confidence}.  That is, if $\prior_x$ is such a distribution, then the false confidence theorem says there exists a hypothesis $A \subseteq \TT$ such that 
\begin{equation}
\label{eq:fc}
\theta \in A \quad \text{and} \quad \prob_{\theta}\{ \prior_X(A) \leq \alpha\} > \alpha, \quad \text{for some $\alpha \in [0,1]$}. 
\end{equation}
Intuitively, if $\alpha$ represents the cutoff\footnote{It's not necessary that the $\alpha$ inside and outside of \eqref{eq:fc} be the same; the inside $\alpha$ can be replaced by some $\tau(\alpha)$ for a bijection $\tau: [0,1] \to [0,1]$ with $\tau(a) \leq a$ for all $a \in [0,1]$. But the small/not-small threshold must be {\em known} to the user, otherwise it's not practically useful, so I see no compelling reason to consider anything other than $\tau(a)=a$.}  between ``small'' and ``not small,'' then \eqref{eq:fc} says that the event $\{\text{$\prior_X(A)$ is small}\}$ has not-small $\prob_{\theta}$-probability when $\theta \in A$.  Since we'd be inclined to doubt the truthfulness of $A$ when $\prior_X(A)$ is small, the property in \eqref{eq:fc} creates a risk of systematically misleading conclusions, e.g., rejecting hypotheses that are true.  Consequently, inference based on $\prior_X$ is at risk of being misleading.  

The false confidence theorem, in its current form, is only an existence result.  It could be that the only problematic $A$'s are trivial, e.g., singletons or other measure-theoretically tiny sets, not of practical interest.  Despite the motivating example in \citet{balch.martin.ferson.2017} involving non-trivial hypotheses, most statisticians have dismissed this result; apparently they don't believe non-trivial hypotheses can be afflicted.  So, the burden is on me/us to push this result to the point that the conclusion can't be denied.  

Fortunately, the structure provided by the invariant statistical model allows for a more in-depth investigation into this question.  Unfortunately, what I present below still falls short of providing a characterization of those $A$ that are afflicted with false confidence.  I can only give a (fairly general) sufficient condition for a hypothesis $A$ to be free of false confidence.  The necessary conditions that would completely settle the matter are still out of reach, but I think baby steps like this one are still valuable.  

First a bit more notation/terminology.  Let $\G$ be the group of transformations on $\XX$ described before, still with the simplifying assumption that $\TT = \G = \Gbar$, and let $\K$ be another group with binary operation $\ast$.  Suppose further that $\K$ is equipped with a total order $\lesssim$ that's bi-invariant in the sense that 
\[ a \lesssim b \implies k \ast a \lesssim k \ast b \quad \text{and} \quad a \ast k \lesssim b \ast k, \quad \text{for all $a,b,k \in \K$}. \]
Let $\psi: \G \to \K$ be a homomorphism, so that $\psi(g \circ g') = \psi(g) \ast \psi(g')$ for all $g,g' \in \G$.  Now, for a fixed $k \in \K$, define the hypothesis about $\Theta$ as 
\begin{equation}
\label{eq:special.A}
A = \{\theta: \psi(\theta) \lesssim k\}. 
\end{equation}
The claim is that hypotheses of the form \eqref{eq:special.A} are not afflicted by false confidence.  

\begin{thm}
\label{thm:fc}
For a homomorphism $\psi$ from $(\G, \circ)$ to the ordered group $(\K, \ast, \lesssim)$, and for $k \in \K$, the hypothesis $A=A_{\psi,k}$ in \eqref{eq:special.A} is {\em not} afflicted with false confidence, i.e., 
\[ \sup_{\theta \in A} \prob_{\theta}\{ \prior_X(A) \leq \alpha\} \leq \alpha, \quad \text{for all $\alpha \in [0,1]$}. \]
\end{thm}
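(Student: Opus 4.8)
The plan is to exploit the group structure exactly as in the proof of Theorem~\ref{thm:char}, pushing everything through the pivotal representation of the fiducial distribution. Recall that, writing $t(x)=(g,u)$, the fiducial distribution is $\prior_x(A)=\prob(g\circ H^{-1}\in A\mid U=u)$, where $H$ has the $\theta$-free conditional distribution of $\theta^{-1}\circ G$ given $U=u$, and $G$ is the first coordinate of $t(X)$ under $X\sim\prob_\theta$. For the hypothesis $A=A_{\psi,k}=\{\theta:\psi(\theta)\lesssim k\}$, the key observation is that, because $\psi$ is a homomorphism, $g\circ H^{-1}\in A$ if and only if $\psi(g)\ast\psi(H)^{-1}\lesssim k$, and by bi-invariance of $\lesssim$ this is equivalent to $\psi(H)^{-1}\lesssim\psi(g)^{-1}\ast k$, i.e.\ to $\psi(H)$ lying in a fixed (data-dependent but $\theta$-free) ``half-line'' of $\K$. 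So $\prior_x(A)=\prob(\psi(H)\in \{m:m\gtrsim \psi(g)^{-1}\ast k\}\mid U=u)$, which depends on $x$ only through $(g,u)$ in a monotone way: as $\psi(g)$ ``moves past'' $k$ in the order, $\prior_x(A)$ sweeps from near $1$ down to near $0$.

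Next I would translate the sampling statement ``$\prior_X(A)\le\alpha$'' back into an event about $\psi(G)$ under $\prob_\theta$. Using the representation above with $G=\theta\circ H'$ (where $H'$ has the same conditional law given $U=u$ as $H$), $\psi(G)=\psi(\theta)\ast\psi(H')$, so the event $\{\prior_X(A)\le\alpha\}$ becomes, conditionally on $U=u$, an event of the form $\{\psi(H')\gtrsim c_\alpha(u,\theta)\}$ for an appropriate threshold. The crucial point is that when $\theta\in A$, i.e.\ $\psi(\theta)\lesssim k$, bi-invariance lets me bound $\psi(\theta)\ast\psi(H')\lesssim k\ast\psi(H')$, which means the event $\{\prior_X(A)\le\alpha\}$ under $\prob_\theta$ is \emph{contained} in the corresponding event computed at the ``boundary'' parameter value where $\psi(\theta)=k$. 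On that boundary, the calculation collapses to exactly the tautological identity used in Theorem~\ref{thm:char}: the conditional probability equals $\alpha$. Hence $\prob_\theta\{\prior_X(A)\le\alpha\mid U=u\}\le\alpha$ for every $\theta\in A$ and every $u$, and integrating out $U$ (whose distribution is $\theta$-free) and taking the supremum over $\theta\in A$ gives the claim.

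The main obstacle I anticipate is handling the order-theoretic bookkeeping cleanly, in particular the direction of the inequalities when inverting group elements and the treatment of boundary/atom cases. If the conditional law of $\psi(H)$ has an atom at the relevant threshold $\psi(g)^{-1}\ast k$, then $\prior_x(A)$ as a function of $\psi(g)$ may jump rather than vary continuously, and one has to be careful that the ``boundary'' comparison still yields the $\le\alpha$ bound rather than a strict equality; this is exactly the place where the general theory (Wilks-type quantile arguments) needs the $\prob_\theta$-probability of $\{f(H,u)\le z_\alpha(u)\}$ to be $\le\alpha$ rather than $=\alpha$. I would phrase the threshold comparison using quantiles of the $\theta$-free conditional distribution of $\psi(H)$ given $U=u$, exactly mirroring the $z_\alpha(u)$ device in the proof of Theorem~\ref{thm:char}, so that the atom case is absorbed automatically. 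A secondary (purely expository) obstacle is making sure the half-line $\{m:m\gtrsim c\}$ is measurable and that $\psi$ is measurable, but under the standing assumptions (and since $\K$ carries a total order compatible with its group structure) this is routine.
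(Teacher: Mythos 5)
Your proposal follows essentially the same route as the paper's proof: represent $\prior_x(A)$ as a one-sided event about the pivot $\psi(H)$ given $U=u$, translate $\{\prior_X(A)\le\alpha\}$ through the $(1-\alpha)$-quantile $z_\alpha(u)$ of the $\theta$-free conditional law of $\psi(H)$, use $\psi(\theta)\lesssim k$ with right-invariance of $\lesssim$ to dominate by the ``boundary'' case, and integrate out the ancillary $U$ --- exactly the paper's argument, including the quantile device that absorbs atoms. The only blemish is a bookkeeping inversion: the half-line threshold should be $k^{-1}\ast\psi(g)$ rather than $\psi(g)^{-1}\ast k$ (they coincide when $\K$ is abelian, and your stated monotonicity of $\prior_x(A)$ in $\psi(g)$ already matches the corrected version), so the proof is sound as proposed.
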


\begin{proof}
Take $A$ as in \eqref{eq:special.A} and fix any $\theta \in A$.  Then $\psi(\theta) \lesssim k$ and, hence, $k^{-1} \ast \psi(\theta) \lesssim e$, where $e$ here denotes the identity element in $\K$.  Recall that the fiducial distribution $\Theta \sim \prior_x$ equals the conditional distribution of $g \circ H^{-1}$, given $U=u$, which depends on the values $(g,u)$ of $t(x)$ but not on $\theta$.  Since $\lesssim$ is bi-invariant and $\psi$ is a homomorphism, we find that $\psi(\Theta) \lesssim k$ is equivalent to $\psi(H) \gtrsim k^{-1} \ast \psi(g)$ and, consequently, 
\begin{align*}
\prior_x(A) \leq \alpha & \iff \prob\{\psi(H) \gtrsim k^{-1} \ast \psi(g) \mid U=y\} \leq \alpha \\
& \iff k^{-1} \ast \psi(g) \gtrsim z_\alpha(u), 
\end{align*}
where $z_\alpha(u)$ is the $(1-\alpha)$-quantile of $\psi(H)$, relative to $\lesssim$, based on the conditional distribution of $H$, given $U=u$, derived from $X \sim \prob_\theta$.  But the distribution of $G$, as a function of $X \sim \prob_{\theta}$, given that the $U$-component of $t(X)$ is $u$, is exactly the distribution of $\theta \circ H$, as a function of $H$, given $U=u$.  Therefore, 
\begin{align}
\prob_{\theta}\{ \prior_X(A) \leq \alpha \} & = \prob\{ k^{-1} \ast \psi(\theta \circ H) \gtrsim z_\alpha(u) \mid U=u\} \notag \\
& = \prob\{ k^{-1} \ast \psi(\theta) \ast \psi(H) \gtrsim z_\alpha(u) \mid U=u\} \notag \\
& \leq \prob\{ \psi(H) \gtrsim z_\alpha(u) \mid U=u \}, \label{eq:quantile}
\end{align}
where the inequality in \eqref{eq:quantile} follows from the fact that 
\[ k^{-1} \ast \psi(\theta) \ast \psi(H) \gtrsim z_\alpha(u) \impliedby \psi(H) \gtrsim z_\alpha(u), \]
which, in turn, is a consequence of the fact that 
\[ \theta \in A \iff \psi(\theta) \lesssim k \iff k^{-1} \ast \psi(\theta) \lesssim e, \]
where $e$ is the identity element in $\K$.  By definition of $z_\alpha(u)$, the probability in \eqref{eq:quantile} is no more than $\alpha$, which completes the proof. 
\end{proof}

I apologize for the level of abstraction, but this formulation helps to pinpoint the kind of structure that's incompatible with false confidence.  To make this result more tangible, and without much loss of generality, one can think of $(\K, \ast, \lesssim)$ as real numbers under addition.  Then the most natural kinds of homomorphisms in this case would be linear functions of $\theta$.  Then Theorem~\ref{thm:fc} says that hypotheses concerning linear functions of $\theta$ are free from false confidence, e.g., if $\theta$ is a $D$-vector, then the fiducial (and Bayesian) probabilities assigned to hypotheses $A=\{\theta: \theta_d \leq k\}$ for $k \in \RR$ and $d=1,\ldots,D$ are reliable for inference.  Non-linear functions, such as $\psi(\theta) = \|\theta\|$ from the motivating example in \citet{balch.martin.ferson.2017}, may not be homomorphisms and, therefore, the corresponding hypotheses are not protected from false confidence by Theorem~\ref{thm:fc} above.  Compare the difference between linear and non-linear functions of $\theta$ here, as it pertains to reliability of inference, to the discussion in \citet{fraser2011}. 

%Fisher too had a dual frequency/subjective interpretation of probability, and he aimed to make a shift from the frequencies about hypothetical data used to calculate the probability to an epistemic degree of belief based on the given data.  This creates some logical problems that Fisher didn't really settle; it's related to the ``single-case'' questions that philosophers talk about.  One way to think about this challenge is there's a difficult step in converting a frequency-based probability into a meaningful subjective probability; but what if the subjective probability doesn't have to be a {\em probability}?  With an imprecise probability as the output, I'm accommodating the ambiguity in this conversion, allowing for any subjective probability that's compatible with the frequencies...

%This connection also sheds light on why fiducial can potentially be misleading for most assertions about the parameter: the inner approximation is tight for assertions that involve tails, but could be very loose for other assertions.  In particular, relevant assertions concerning $\phi=\phi(\theta)$ often are not be tail assertions about $\theta$.  In the satellite collision example, interest is in assertions like $A = \{\theta: \|\theta\| \leq c\}$, which is not a tail assertion relative to the fiducial distribution, $(\Theta \mid Y=y) \sim \nm_d(y, I)$.  

%More generally, the maximal inner approximation of the marginal IM for $\phi$ isn't the marginal inner approximation of the IM for $\theta$...

\section{Conclusion}
\label{S:discuss}

In the case of those invariant statistical models considered here, it's well-known that the fiducial and default (right Haar) prior Bayes solutions agree and, moreover, that the standard credible sets derived from these are also exact confidence regions.  Under the same setup, I showed here that the fiducial solution can also be viewed as the ``best'' probabilistic approximation to the valid IM's possibilistic solution.  This result sheds some important new light on the relationship between fiducial and IM solutions.  Indeed, if one's only concerned with, say, confidence regions for $\Theta$, then both solutions give the same results and, hence, there's no need to bother with the IM's and imprecision.  But if one is interested in other questions, e.g., inference about certain features $\psi(\Theta)$ of $\Theta$, then the connection between the fiducial and IM solutions is broken and whatever reliability the former might have for inference about $\Theta$ is lost when marginalized to $\psi(\Theta)$.  The result in Section~\ref{S:false.confidence} offers some new insights on the kinds of features $\psi$ for which the fiducial solution's reliability isn't lost in marginalization.  

Beyond the invariant statistical models, it's suggested that one could {\em define} a fiducial distribution to be the maximal probabilistic approximation to the IM solution's possibilistic output.  Just like in the invariant model case, this definition of a fiducial distribution would ensure that fiducial credible regions are exact confidence regions.  But this strong calibration comes at a cost, since it's unclear how to identify and numerically evaluate the solution to that optimization problem.  In that case, one might prefer an existing constructive solution, e.g., generalized fiducial, but this would likely not be a member of the IM's credal set and, therefore, the aforementioned confidence-connection can at best be achieved in an asymptotically approximate sense.  And just like above, the strong validity property achieved by the IM is out of reach for all fiducial solutions. 

Since the validity property is fundamental to reliable uncertainty quantification, my claim is that the IM solution is objectively better than the Bayes/fiducial solution.  So, if one opts for the latter instead of the former, then it's solely for the familiarity and/or convenience of ordinary probability theory, and this doesn't come without the cost of a potential loss of reliability.  Moreover, as the computational tools for evaluating the IM output continue to develop, this convenience gap will likewise continue to close to the point that there's no justification to sacrifice on reliability.

\section*{Acknowledgments}

This work is supported by the U.S.~National Science Foundation, grant SES--2051225.

%\appendix
%
%\section{Proof of Theorem~\ref{thm:char}}
%\label{A:proof}
%
%.......... 

\bibliographystyle{apalike}
\bibliography{/Users/rgmarti3/Dropbox/Research/mybib}

\end{document}